\tikzstyle{decision} = [diamond, draw, fill=blue!20, 
\tikzstyle{block} = [rectangle, draw, fill=blue!20,  text centered, rounded corners, minimum height=4em]
\tikzstyle{line} = [draw, -latex']
\tikzstyle{cloud} = [draw, ellipse,fill=red!20, node distance=6.6cm,
\newtheorem{Theorem}{Theorem}
\newtheorem{Definition}{Definition}
\newtheorem{Corollary}{Corollary}
\newtheorem{Proposition}{Proposition}
\newtheorem{Remark}{Remark}
\newtheorem{Claim}{Claim}
\newtheorem{Example}{Example}
\newcounter{casenum}
\newenvironment{caseof}
  {\setcounter{casenum}{0}}
  {\par\addvspace{.5\baselineskip}}
\renewcommand*{\thecasenum}{\arabic{casenum}}
\newcommand{\tcase}[2]{%
  \par\addvspace{.5\baselineskip}%
  \noindent \refstepcounter{casenum}\textbf{Case \thecasenum:}~#1\\*
  #2\ifhmode\unskip\fi
}
\crefname{casenum}{\protect\textbf{case}}{\protect\textbf{cases}}
\Crefname{casenum}{\protect\textbf{Case}}{\protect\textbf{Cases}}
\providecommand*{\Xmath}[1]{\ensuremath{#1}\xspace}
\newcommand{\N}{{\mathbb N}}
\newcommand{\D}{\Xmath{\mathcal D}}
\def\E{{\mathcal E}}
\def\N{{\mathcal N}}
\def\T{{\mathcal T}}
\def\NN{{\mathbb N}}
\def\ZZ{{\mathbb Z}}
\newif\ifcomment
\renewcommand{\epsilon}{\varepsilon}
\newcommand{\A}{\mathcal{A}}
\newcommand{\defeq}{\vcentcolon=}
\newcommand{\bN}{{\mathbb N}}
\newcommand{\pn}{P_N}  
\newcommand{\ton}{\lbrack N \rbrack}  
\newcommand{\tton}{\lceil n \rceil}  
\def\qed{\hfill $\square$}
\pgfplotsset{compat=1.15}
\newcounter{modulo}
\begin{document}
        \title{Non-Adaptive and Adaptive Two-Sided Search with Fast Objects\thanks{Part of the results were presented at ISIT 2021 in \cite{LD21}.}}
        \author{Alexey Lebedev\inst{1} \and Christian Deppe\inst{2}}
        \institute{ Kharkevich Institute for Information Transmission Problems\\
             Russian Academy of Sciences, Moscow, Russia \\
             \email{al\_lebed95@mail.ru}
             \and
             Institute for Communications Engineering\\
                          Technical University of Munich, Munich, Germany\\
        \email{christian.deppe@tum.de} 
        }
        
        \maketitle

\begin{abstract}
In 1946, Koopman introduced a two-sided search model. In this model, a searched object is active and can move, at most, one step after each test. We analyze the model of a combinatorial two-sided search by allowing more moves of the searched object after each test. 
We give strategies and show that they are optimal. We consider adaptive and non-adaptive strategies.
We show the surprising result that with the combinatorial two-sided search on a path graph, the optimal non-adaptive search needs the same number of tests as the corresponding adaptive strategy does.
The strategy obtained can also be used as a encoding strategy to sent the position of a moving element through a transmission channel.
\end{abstract}

\section{Introduction}

Koopman worked on search theory during World War II. His models and results were finally published in his book \cite{K46} in 1946 after the end of the war. In this work he introduced the term two-sided search. 
The name two-sided search introduced by Koopman for this problem is meant to express that unlike in traditional search, the searched object can "react" by changing its position. The name may be a bit unfortunate, since the search itself is only performed from one side. It should be expressed that both sides (seeker and searched object) are active.
Koopman considered probabilistic search models. 


Ahlswede had the idea of investigating a combinatorial model of searching for moving elements. He raised this question during a discussion in the "Search Methodologies II" workshop at the ZiF in 2010, and the first results were published in \cite{ACDL13}. For detailed results and possible applications we refer the reader to \cite{ACDL18}. 
Previously, similar models have been considered.
A related model was viewed in the form of a hidden object game in \cite{BK16}. 
In \cite{T86}, a cops and robber game was introduced. This model was also studied in \cite{BGGK09} and \cite{DDTY15}. In \cite{BW13} and in \cite{AFGP16}, a hunters and rabbit game was introduced. Further, in \cite{BL21}, the authors considered searching for an intruder on graphs. While all of these models have some similarities to our model, the differences lead to the need for a different analysis of our problem. 

A combinatorial search problem is analyzed in a discrete space and consists of finding a set of items satisfying specified requirements.
Nowadays, combinatorial search  involves an extensive number of challenging  optimization problems which come directly from practical applications.
The fundamentals of combinatorial search can be found in 
\cite{A87}, \cite{Aig88} and \cite{K73}.

The two-sided search strategies considered so far are for the case where the searched object can move, at most, one step further after each test. In the application considered by Koopman (searching for a boat), it was possible that the object changed its speed significantly. This is also relevant for other applications. Therefore we analyze the case where the searched object takes not just one step, but up to $k$ steps after each successful test. We assume that $k$ is known to the searcher.

So far, only adaptive two-sided search strategies have been considered. In adaptive search strategies, each test depends on the results of the previous tests. However, in practice, it is not always possible to get the test results before the next test. In this case, we need non-adaptive search strategies where the tests do not depend on the previous results.

In similar purely combinatorial search problems, adaptive search methods often require significantly fewer tests than in the cases of non-adaptive search methods (see \cite{C13}). This is different in probabilistic memoryless search problems (see \cite{A12, B13}). Here there are models in which the number of non-adaptive tests and adaptive tests hardly differ.
We give an example of a purely combinatorial non-adaptive two-sided search strategy that requires as many tests as the adaptive strategy and do a worst-case analysis of our model. This means that the object moves in such a way that we need as many tests as possible. The object follows an adversarial strategy, so to speak.
The paper is organized in following manner: In Section~2 we describe the general model for an adaptive and non-adaptive search. 
In Section~3 we solve the problem of finding an optimal adaptive strategy for
searching for faster objects on a cycle graph and a path graph.
In Section~4 we present a non-adaptive strategy for the search on a path graph.
In Section~5 we explain the equivalent coding problem in more detail.

\section{Model and Definitions}
\label{model}
We follow the same notations as given in \cite{ACDL13}. Therefore the model is first defined on a general graph, and afterward special cases are treated.
We let $\N=\{1,2,\dots,N\}$ denote the search positions of the searched object. The search space is modelled by the graph $G=(\N,\E)$. A searched object, also called a target, occupies one of the vertices in $\N$, unknown to the searcher. The searcher is able to detect the presence of the target at any subset of $\N$, i.e., for any $\T_i\subseteq \N$, which is called the $i$th test set. The searcher can learn whether the target is located at $\T_i$ or not. He receives 
$y_i=1$ as a test result if the target is located in $\T_i$, otherwise he receives $y_i=0$. 
 After each test,
 the target can move at most $k$ times to an adjacent vertex, or stay in the same place. 
For ease of description, we assume that each vertex in our graph $G$ has a loop. Thus
we may formally assume that in each time unit
the target moves $k$ times to an adjacent vertex. 
The goal is to find the object with accuracy $s$. This means that the searcher can specify a subset with cardinality $s$ in which the object will be located in the next step.
One cannot achieve accuracy $s=1$ in general
because the object can still move after the last test.
This is a generalization of the model with $k=1$, as was considered in  \cite{ACDL13}.

We assume that the search starts when the target is at vertex $d_1$. The positions of the target before each test are denoted by $d^{n+1} \defeq (d_1,\dots,d_{n+1}) \in \N^{n+1}$, where there is a walk of length at most $k$ between $d_i$ and $d_{i+1}$ for all $i\in\{1,\cdots,n\}$. Thus the vector $d^{n+1}$ indicates that the target occupies the vertex $d_j$ at time $j$. Recall that in standard search models (without movement) $d_i=d_j \text{ for } 1 \leq i,j \leq n+1$.

Next we give a formal description of our search model.
Let $d_1\in \N$ be the initial unknown position of the target and let
 $(\T_1, \T_2, \dots, \T_n)$ be a sequence of test sets $\T_i\subseteq \N$ (tests for short), which are performed successively in time.
 
  We also analyze a non-adaptive version of our problem. Even in the non-adaptive setting, the target can move up to $k$ steps between any two tests.
 In this case the tests are not able to depend on 
previous test results. The non-adaptive tests are still executed sequentially. It is not possible to execute them simultaneous. One reason is that  the time units are so short that one cannot evaluate their results before deciding on the next test set.
Therefore, in a non-adaptive search strategy, each test is independent of the results of the other tests.

 In the case of an adaptive search strategy, the $i$th test depends on the $i-1$ previous test results. To be precise, in an adaptive case, each test set $\T_i$ is a value of a function depending on the sequence $y^{i-1}=(y_1,\dots,y_{i-1})$.
We also characterize the adaptive strategy by the following notation
$\left(\T_1, \T_2(y^1), \dots, \T_n(y^{n-1})\right)$.
Let $(d_1,\ldots, d_{n+1})$ specify the positions occupied by the target during the search.
For each test $\T_i$, the test function is defined as $f_{\T_i}(d_i)=y_i$.

We call $(\T_1, \T_2(y^1), \dots, \T_n(y^{n-1}))$ a strategy of length $n$.
  A non-adaptive strategy $S(n,N)$ can be represented by an $n\times N$ test matrix $A_{n,N}=\{a_{ij}\}$,
 where
 \begin{align*}
     a_{ij} =
     \begin{cases}
    1 & j\in \T_i
     \\
     0 & \text{otherwise}
     \end{cases}, \text{ for $1 \leq i \leq n$ and $1 \leq j \leq N$.}
 \end{align*}
It follows from the definition that the $i$th row of the matrix $A_{n,N}$ represents the test $\T_i$. The set of possible positions of the target after the $i$-th test is specified by $\D_i$, thus $\D_0=\N$. $P_i^j$ 
is a variable to denote arbitrary paths from $i$ to $j$ in $G=(\N,\E)$. The number of edges on the path $P_i^j$ is referred to as the length of $P_i^j$ and is denoted by $l(P_i^j)$. 
\begin{Definition}
For $\A\subseteq \N$, let $\Gamma_k (\A)$ be the $k$-neighborhood of $\A$ which is defined as follows:
  \begin{align*}
      \Gamma_k (\A):=\{j\in\N : for\ some\ i\in \A\ there\ exists\ P_i^j \text{ with } l(P_i^j) \leq k \}.
  \end{align*}
 \end{Definition}
 Observe that $\A\subseteq \Gamma_k(\A)$.

 Further, for the set of possible positions we obtain
\begin{equation*}
\D_i=\left\{
\begin{array}{ll}
\Gamma_k(\T_i\cap \D_{i-1})  &,\ {\rm if\ } f_{\T_i}(d_i)=1\\
\Gamma_k(\D_{i-1}\backslash \T_i)  &,\ {\rm if\ }f_{\T_i}(d_i)=0.
\end{array}\right.
\end{equation*}
We say that the test $\T_i$ reduces $\D_{i-1}$ to $\D_{i}$.

Given a graph $G=(\N,\E)$, for any $s\in\NN$ a strategy with 
$n$ tests is called $(G,s)$-successful
if $|\D_i|\leq s$ for some $1 \leq i \leq n$.
We call $s$ the accuracy of the strategy.
Let
\begin{align*}
    s^*(G) = \min\{s \in \mathbb{N}~:~\text{ there exists a } (G,s)\text{-successful strategy}\}.
\end{align*}
Given an integer $s\geq s^*(G)$, we define 
\begin{align*}
    n(G,s) = \min\{n \in \mathbb{N}~:~\text{there exists a } (G,s)\text{-successful strategy with $n$ tests}\}.
\end{align*}
The corresponding strategy is then called an optimal $(G,s)$ strategy.


\section{Adaptive Search for Fast Objects}
In this section we consider the case where the object is allowed to move at most $k$ positions after each test. 
In \cite{ACDL13} the case k=1 was solved for circles, paths and trees. For other graphs there are no known solutions. Instead of allowing the object to take $k$ steps in a graph, one could also connect all vertices in the $k$-neighborhood with edges in the graphs for each vertex. Thus, our result implies new results for slowly moving objects ($k=1$) on the k-th powers of paths and cycles. 
\subsection{Cycle}
Let $C_N=(\N,\E)$  be an undirected cycle of length $N$ with a loop in each node. Then the set of vertices and edges are expressed as follows:
\begin{gather*}
    \N = \{1,\dots,N\},
    \\
    \E = \{(i,i)\}_{i \in \N}~\bigcup~\{N,1\}~\bigcup~\{(i,i+1)\}_{i\in\N\backslash N}.
\end{gather*}
 We denote by $N_c(n,s)$ the maximum $N$, such that there exists an $(C_N,s)$-successful strategy with $n$ tests. 
\begin{Proposition}
\label{s5}
If $k\geq 1$ then the following holds true:
\begin{enumerate}[label=(\roman*)]
\item For $1\leq N \leq 4k$ there does not exist a  $(C_N,s)$--successful   strategy with $s<N$, that is $s^*(C_N)=N$.
\item For $N\geq 4k+1$ there does not exist a  $(C_N,s)$--successful   strategy with $s\leq 4k$, that is $s^*(C_N)\geq 4k+1$.
\end{enumerate}
\end{Proposition}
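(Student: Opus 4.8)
The plan is to prove both parts as impossibility statements by playing the role of an adversary who, after each test, returns the answer that keeps the set of possible positions as large as possible, and then to show by induction on the number of tests that this keeps $\abs{\D_i}$ above the claimed threshold. Fix any strategy and suppose we are about to perform the $i$-th test on the current set $\D_{i-1}$. The test $\T_i$ splits $\D_{i-1}$ into $A:=\T_i\cap\D_{i-1}$ and $B:=\D_{i-1}\setminus\T_i$, and by the update rule $\D_i$ is $\Gamma_k(A)$ if the answer is $1$ and $\Gamma_k(B)$ if it is $0$. Whenever both $A$ and $B$ are nonempty the adversary may return either answer, so everything reduces to bounding $\max\{\abs{\Gamma_k(A)},\abs{\Gamma_k(B)}\}$ from below; if one of $A,B$ is empty, the only surviving option is $\Gamma_k(\D_{i-1})\supseteq\D_{i-1}$, which can only help. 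Since $\D_i$ stays nonempty throughout, the answers selected this way are consistent with an actual target trajectory, so this is a legitimate adversary.

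The engine of the whole argument is one geometric observation about the $k$-neighborhood on a cycle. For nonempty $A\subseteq\N$ the set $\Gamma_k(A)=\bigcup_{v\in A}\Gamma_k(\{v\})$ is a union of arcs each of length at least $2k+1$, because a single vertex already contributes a ball $\Gamma_k(\{v\})$ that is an arc of exactly $2k+1$ vertices. I will use this in a sharpened form valid in the regime $N\ge 4k+1$ where it is applied: if $\abs{\Gamma_k(A)}\le 4k$, then, since two disjoint arcs would occupy $2(2k+1)=4k+2>4k$ vertices, $\Gamma_k(A)$ must be a single (proper, as $4k<N$) arc; writing $a_{\min},a_{\max}$ for the extreme elements of $A$ in that arc, connectivity forces $\Gamma_k(A)=\{a_{\min}-k,\dots,a_{\max}+k\}$, so $(a_{\max}-a_{\min})+2k+1\le 4k$, i.e. $a_{\max}-a_{\min}\le 2k-1$, and $A$ lies in the arc $\{a_{\min},\dots,a_{\max}\}$ of at most $2k$ vertices. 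Establishing this short-span conclusion cleanly is the main obstacle; once it is in place both parts follow quickly.

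For part~(ii) I would prove by induction that $\abs{\D_i}\ge 4k+1$ for every $i$, the base case being $\abs{\D_0}=N\ge 4k+1$. For the inductive step, suppose toward a contradiction that both $\abs{\Gamma_k(A)}\le 4k$ and $\abs{\Gamma_k(B)}\le 4k$ with $A,B$ nonempty. By the sharpened observation each of $A$ and $B$ then lies in an arc of at most $2k$ vertices, so $\abs{\D_{i-1}}=\abs{A\cup B}\le 4k$, contradicting the inductive hypothesis. Hence $\max\{\abs{\Gamma_k(A)},\abs{\Gamma_k(B)}\}\ge 4k+1$, the adversary keeps $\abs{\D_i}\ge 4k+1$ at every step, and no strategy can reach accuracy $s\le 4k$; that is, $s^*(C_N)\ge 4k+1$.

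For part~(i), where $N\le 4k$, I would maintain the stronger invariant $\D_i=\N$. It holds for $\D_0=\N$. If $\D_{i-1}=\N$, a test partitions $\N$ into $\T_i$ and its complement $\T_i^{c}$. If $\Gamma_k(\T_i)\neq\N$, some vertex $w$ is missed, so the $2k+1$ vertices within distance $k$ of $w$ all lie in $\T_i^{c}$; hence $\abs{\T_i}\le N-(2k+1)\le 2k-1$ because $N\le 4k$. Then every run of $\T_i$ has length at most $2k-1$, so $\Gamma_k(\T_i^{c})$ can miss no vertex (a missed vertex would require a run of $\T_i$ of length at least $2k+1$), giving $\Gamma_k(\T_i^{c})=\N$. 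Thus at least one side returns the whole cycle, the adversary preserves $\D_i=\N$, and $\abs{\D_i}=N$ at every step, so no strategy attains $s<N$. As $s=N$ is attained trivially, $s^*(C_N)=N$.
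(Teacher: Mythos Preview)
Your argument is correct. Both parts are handled by the same adversary framework the paper uses, and the key lemma you isolate (if $N\ge 4k+1$ and $|\Gamma_k(A)|\le 4k$ then $A$ sits inside an arc of at most $2k$ vertices) is true; the quickest way to see it is to note that the complement of the single arc $\Gamma_k(A)$ is nonempty, pick its two boundary vertices $l-1$ and $r+1$ (which coincide when $N=4k+1$ and $|\Gamma_k(A)|=4k$), and observe that $A$ avoids the $k$-balls around each, forcing $A\subseteq\{l+k,\dots,r-k\}$.

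Where you genuinely diverge from the paper is in how general tests are dealt with. The paper does not prove anything about arbitrary tests: it first invokes an external reduction (from \cite{ACDL18}) to assume every test is a single arc, and with that assumption in hand both parts become one-liners via the estimate $\max\{|\Gamma_k(A)|,|\Gamma_k(B)|\}\ge \lceil |\D_{i-1}|/2\rceil + 2k$. Your route avoids that citation entirely: the short-span lemma lets you bound $|A|+|B|\le 4k$ directly for arbitrary $\T_i$, and your part~(i) argument likewise works for arbitrary (not just consecutive) tests. The trade-off is that the paper's proof is shorter once the reduction is granted, while yours is self-contained and in fact proves a slightly cleaner structural fact about $\Gamma_k$ on cycles.
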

\begin{proof}
For the proof, we can assume without loss of generality that the tests consist of  consecutive elements, since the number of possible positions of the searched object is greater for non-consecutive tests of the same cardinality. As in \cite{ACDL18}, for every successful test strategy with non-consecutive tests, one can find one with at least the same number of tests using only consecutive tests.
\begin{enumerate}[label=(\roman*)]
    \item Let $1\leq N\leq 4k$.
    Then $\D_0=\{1,\dots,N\}$ is the set of possible targets before the first test. For any test $\T$ of a given strategy it holds
\[
|\D_1|=\max\{|\Gamma_k(\T\cap \D_{0})|,
|\Gamma_k(\D_{0}\backslash \T)|\}=|\D_0|.
\]
This is because the test $\T$ or the complementary test $\D_{0}\backslash \T$ contain at least $\lceil \frac N2\rceil$ elements and the $k$-neighborhood of this set is $\{1,\dots,N\}$ again.
Looking at the worst case, the set of possible positions cannot be reduced.

\item We assume that $N\geq 4k+1$.
Again, $\D_0=\{1,\dots,N\}$ is the set of possible targets before the first test. In addition, the following holds true.
\[
\max\left\{|\Gamma_k(\T_i\cap \D_{i-1})|,
|\Gamma_k(\D_{i-1}\backslash \T_i)|\right\}\geq \left\lceil \frac {|\D_{i-1}|}2 \right\rceil +2k,
\]
because the best possible test halves the number of the possible positions of the object and the target could move by $k$ to each side. By induction on $i$, we have
\[
\left\lceil \frac {|\D_{i-1}|}2 \right\rceil +2k\geq 4k+1.
\]
This completes the proof of Proposition~\ref{s5}.
\end{enumerate}
\qed
\end{proof}

\begin{Theorem}\label{cycle}
For $k\geq 1$, $s\geq 4k$ and $n\geq 0$, the maximum $N$, such that there exists an $(C_N,s)$-successful strategy, is given by
\[
N_c(n,s)= 2^n(s-4k)+4k.
\]
\end{Theorem}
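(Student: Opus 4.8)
The plan is to prove the two inequalities $N_c(n,s)\le 2^n(s-4k)+4k$ (converse) and $N_c(n,s)\ge 2^n(s-4k)+4k$ (achievability) separately, both resting on a single sharp one-step estimate that refines the bound already used in Proposition~\ref{s5}. As in that proof I would first argue that it suffices to consider tests consisting of consecutive vertices, so that every $\D_i$ is an arc of the cycle, and that the first test turns the full cycle $\D_0=\N$ into such an arc. The key step is to show that for an arc $\D_{i-1}$ with $|\D_{i-1}|>4k$ the best test satisfies
\[
|\D_i|=\min_{\T_i}\max\bigl\{|\Gamma_k(\T_i\cap\D_{i-1})|,\,|\Gamma_k(\D_{i-1}\setminus\T_i)|\bigr\}
=\left\lceil \tfrac{|\D_{i-1}|}{2}\right\rceil+2k,
\]
the lower bound coming from the fact that one of the two complementary pieces has at least $\lceil|\D_{i-1}|/2\rceil$ elements and its $k$-neighborhood adds at least $2k$ further vertices, and the upper bound from splitting the arc into two balanced consecutive halves. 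Substituting $M_i:=|\D_i|-4k$ turns this recursion into $M_i=\lceil M_{i-1}/2\rceil$, whence $M_n=\lceil (N-4k)/2^n\rceil$ by the standard nested-ceiling identity.

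For the converse I would iterate the lower half of the displayed estimate from $\D_0=\N$, giving $|\D_i|\ge\lceil (N-4k)/2^i\rceil+4k$ for every $i$ by induction. Since the right-hand side is non-increasing in $i$, the smallest that any $|\D_i|$ with $i\le n$ can be is $\lceil(N-4k)/2^n\rceil+4k$. If $N>2^n(s-4k)+4k$ then $(N-4k)/2^n>s-4k$, so this integer ceiling is at least $s-4k+1$ and hence $|\D_i|\ge s+1$ for all $i\le n$; no strategy can then be $(C_N,s)$-successful, proving $N_c(n,s)\le 2^n(s-4k)+4k$.

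For achievability I would exhibit the balanced halving strategy for $N=2^n(s-4k)+4k$. Here $N-4k=2^n(s-4k)$ is divisible by $2^n$, so every arc length $|\D_{i-1}|=2^{n-i+1}(s-4k)+4k$ that occurs is even and all the halvings are exact, removing the ceilings. Taking $\T_i$ to be the first half of the current arc forces $|\D_i|=\tfrac12|\D_{i-1}|+2k=2^{n-i}(s-4k)+4k$ regardless of the answer, and after $n$ steps $|\D_n|=s$, so the strategy is $(C_N,s)$-successful; the degenerate cases $n=0$ (where $\D_0=\N$ already has $s$ elements) and $s=4k$ (where Proposition~\ref{s5} forbids any larger $N$) are checked directly.

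The main obstacle is the careful bookkeeping of the $k$-neighborhood on the cycle: I must verify that the larger complementary piece really gains a full $2k$ vertices under $\Gamma_k$, which fails only when that piece is so large that $\Gamma_k$ wraps around the whole cycle --- but in that situation $|\D_i|=N$, which already exceeds the target $s$ in the relevant range, so the estimate still holds. Equally, I must confirm that a balanced consecutive split attains the bound with equality and that the very first test, acting on the full cycle rather than a proper arc, obeys the same recursion; these two points, together with the parity/ceiling argument, are where the analysis needs the most attention, while the algebra reducing the recursion to the closed form $2^n(s-4k)+4k$ is routine.
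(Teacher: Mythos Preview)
Your proposal is correct and follows essentially the same route as the paper: both the achievability via balanced halving of the current arc (yielding $|\D_i|=2^{n-i}(s-4k)+4k$) and the converse via the one-step estimate $|\D_i|\ge\lceil|\D_{i-1}|/2\rceil+2k$ iterated from $\D_0=\N$ are exactly what the paper does. Your substitution $M_i=|\D_i|-4k$ and the nested-ceiling identity are a tidy repackaging of the same induction, and your explicit treatment of the wrap-around and first-test edge cases is a bit more careful than the paper's, but there is no genuine methodological difference.
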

\begin{proof}
Let $k\geq 1$, 
$s\geq 4k$ 
and $N=(s-4k)2^n+4k$. The addition modulo $N$ is denoted by $\oplus$. First we introduce a successful strategy as follows. Note that $\D_0=\N=\{0,1,2,\dots,N-1\}$. We describe the strategy inductively, i.e., let $x\in\N$ be chosen such that
\[
\D_{i-1}=\{x\oplus 1, x\oplus 2, \dots, x\oplus 2^{n-(i-1)}(s-4k)+4k\}.
\]
Then we choose the test
\[
\T_i=\{x\oplus 1, x\oplus 2, \dots, x\oplus 2^{n-i}(s-4k)+2k\}.
\]
Observe that after $i$ tests by induction, we obtain
\[
|\D_i|= 2^{n-i}(s-4k)+4k.
\]
After $n$ tests we have $|\D_n|=s$.

To prove the upper bound, we consider $N=(s-4k)2^n+4k+1$ and consider any strategy with $n$ tests. We will show that this 
strategy is not successful. Let $S$ be a strategy with the
tests $\T_1,\T_2,\dots,\T_n$. Now we consider $\D_1,\dots,\D_n$. Note that there is always a test result such that
\[
|\D_i|=\max\{|\Gamma_k(\T_i\cap \D_{i-1})|,
|\Gamma_k(\D_{i-1}\backslash \T_i)|\}.
\]
Since the searched object can move by $k$ positions after each test, we get
\[
|\D_i|\geq \left\lceil \frac {|\D_{i-1}|}2\right\rceil + 2k.
\]
We have $|\D_0|=(s-4k)2^n+4k+1$ and therefore 
$|\D_i| \geq (s-4k)2^{n-i}+4k+1$ for all $i$. We get $|\D_n|=s+1$, and therefore
$S$ 
is not successful.
\qed
\end{proof}

\subsection{Path}

Let
$N \in \bN^+.$ A path $\pn$ is an undirected graph
\begin{align*}
\pn = (\ton, \{(1, 1)\}\cup \bigcup_{2 \leq i \le N}{\{(i-1, i), (i, i) \}}).\label{eq:1}
\end{align*}
We denote by $N_p(n,s)$ the maximum $N$, such that there exists a
$(P_N,s)$-successful strategy with $n$ tests.  
%
\begin{Proposition}\label{pcase}
  For a path $\pn$ with $k \in \bN^+, N \geq 4k+1$ vertices, there does not exist an $(P_N,s)$-successful strategy
  with $s \leq 3k,$ i.e.,
  \begin{align*}
    s^{*}(\pn)\geq 3k+1.
  \end{align*}
\end{Proposition}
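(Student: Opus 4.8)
The plan is to argue by an adversary (worst-case) strategy: I will show that for \emph{every} strategy the target can answer the tests so that the set of possible positions never drops below $3k+1$, i.e.\ $|\D_i|\geq 3k+1$ for all $i\geq 0$. Since a $(\pn,s)$-successful strategy requires $|\D_i|\le s$ for some $i$, maintaining this invariant rules out every $s\le 3k$ and yields $s^{*}(\pn)\geq 3k+1$. The induction starts from $|\D_0|=N\geq 4k+1\geq 3k+1$, so the whole argument reduces to a single inductive step: assuming $|\D_{i-1}|\geq 3k+1$, the target can force $|\D_i|=\max\{|\Gamma_k(\T_i\cap\D_{i-1})|,\,|\Gamma_k(\D_{i-1}\setminus\T_i)|\}\geq 3k+1$.

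As in the proof of Proposition~\ref{s5}, I would first invoke the reduction of \cite{ACDL18} to assume without loss of generality that each test consists of consecutive vertices, since spreading a test out only enlarges the $k$-neighborhoods and hence only helps the target. Under consecutive tests the searcher does best by splitting the interval $\D_{i-1}=[a,b]$ into a prefix $[a,c]$ and a suffix $[c+1,b]$ (a test whose trace on $\D_{i-1}$ lies in the interior would disconnect the complement and again only increase the neighborhood), so by induction every $\D_i$ may be assumed to be an interval. Writing $m=b-a+1=|\D_{i-1}|$ and introducing the two outer clearances $\ell_a=\min(k,a-1)$ and $r_b=\min(k,N-b)$, the core computation is that, once the split point is far enough from both ends for the inner extensions to be full, the two competing neighborhoods have sizes $p+\ell_a+k$ (prefix) and $q+k+r_b$ (suffix) with $p+q=m$. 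Balancing these gives
\[
|\D_i|\ \geq\ \left\lceil \frac{m+\ell_a+r_b}{2}\right\rceil + k .
\]

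The key step is then the uniform lower bound $m+\ell_a+r_b\geq 4k+1$, from which $|\D_i|\geq \lceil (4k+1)/2\rceil+k=(2k+1)+k=3k+1$ follows at once. I would verify this by a short case split on how $\D_{i-1}$ meets the ends of the path: if it touches neither end then $\ell_a=r_b=k$ and $m+\ell_a+r_b\geq m+2k\geq 5k+1$; if it comes within $k$ of exactly one end then $m+\ell_a+r_b\geq m+k\geq 4k+1$; and in the remaining case, where the clearances at both ends are below $k$, one has $\ell_a+r_b=(a-1)+(N-b)=N-m$, so $m+\ell_a+r_b=N\geq 4k+1$. In every case the bound holds, which closes the induction.

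The main obstacle is the boundary bookkeeping hidden in the phrase ``far enough from both ends.'' One has to check that at the balancing split the inner extensions really equal $k$, so that the displayed formula is valid, and separately handle splits placed within $k$ of an endpoint; there, however, the larger of the two pieces retains its full $k$-neighborhood on the free side and already has size at least $m+k\geq 4k+1>3k+1$, so such splits only help the target and need not be optimized. The only other delicate point is the interaction of the ceiling with the parity of $m+\ell_a+r_b$, which is harmless because the bound $m+\ell_a+r_b\geq 4k+1$ pushes the ceiling up to exactly $2k+1$.
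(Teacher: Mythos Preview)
Your approach is correct and uses the same adversary framework as the paper: both maintain the invariant that after every test the candidate set still contains an interval of size at least $3k+1$. The paper carries an explicit witness interval $\tilde{\D}_i=\{j_i,\ldots,j_i+3k\}\subseteq\D_i$ and runs a five-way case split on $|\tilde{\D}_i\cap\T_{i+1}|$ together with the distances $l,r$ of the test to the path endpoints, prescribing the adversary's answer case by case. You instead reduce to prefix/suffix tests on the current interval and replace the case analysis by a single balancing inequality, with all boundary bookkeeping absorbed into the uniform bound $m+\ell_a+r_b\geq 4k+1$ (verified by your short three-case argument). That organizing inequality is the cleanest part of your write-up and is precisely what the paper's separate cases are implicitly using; your route is more transparent and yields the sharper intermediate estimate $|\D_i|\geq\lceil(m+\ell_a+r_b)/2\rceil+k$.

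One point deserves more care than you give it: the claim that it suffices to consider tests whose trace on $\D_{i-1}$ is a prefix or suffix. The reduction in \cite{ACDL18} that you cite is to \emph{interval} tests; the further restriction to prefix/suffix needs its own sentence, because you rely on $\D_i$ again being an interval for the induction. For an interior interval test $[c,d]\subsetneq[a,b]$ you should say explicitly which answer the adversary picks so that $\D_i$ stays an interval: if $d-c+1\leq 2k$ the answer ``no'' merges the two complementary pieces into the single interval $[\,a-\ell_a,\,b+r_b\,]$ of size $m+\ell_a+r_b\geq 4k+1$; if $d-c+1\geq 2k+1$ the answer ``yes'' already gives an interval, and a short check using $c>a\geq 1$, $d<b\leq N$ and $N\geq 4k+1$ shows $|\Gamma_k([c,d])|\geq 3k+1$. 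With that paragraph added, your inductive hypothesis ``$\D_{i-1}$ is an interval of size at least $3k+1$'' is fully justified and the balancing argument goes through unchanged. The paper sidesteps this issue by tracking only a sub-interval $\tilde{\D}_i\subseteq\D_i$ rather than $\D_i$ itself, at the cost of the heavier case analysis.
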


%
\begin{proof}
We can define an adversarial
  strategy $(d_i)_{i \in \tton}$ for the target, i.e., a sequence of test results that guarantees the following claim, and which implies the desired result.
  \begin{Claim}
Let T be a strategy with $n \in \bN^{+}$ tests. Let $N \geq 4k+1.$ For each $i \in \tton$, there exist
    a sequence $\tilde{\D}_i \vcentcolon= \{j_i,j_i+1,\dots,j_i+ 3k\} \subseteq \D_i.$
  \end{Claim}
  We prove the claim by induction. \\
  {\bf Induction basis:} Let $j_0= 1$. Then the claim trivially holds for $\D_0$, since
  $\N = \D_0\supseteq \{1,2,\dots,3k+1\}$. \\
  {\bf Induction step:} Assume now that the statement holds for an
  $i\geq 0$. Then there exists $\tilde{\D}_i=\{ j_i,j_i+1,\dots,j_i+ 3k\} \subseteq \D_i$ such that each position in  $\D_i$ is a possible location for the target after the first $i$ moves. \\
  $\Leftrightarrow$ For all $x \in \D_i$ there exist a sequence of adversarial moves $(d_i)_{i\in \tton}$
  such that $d_i=x$ matches $f_{\T_i}(x)$.

We define $l=\min(\T_{i+1}\cap \tilde{\D}_i)-1$,
  $r=N-\max( \T_{i+1}\cap \tilde{\D}_i)$, and the following regions (see also Figure~\ref{case}:

  \begin{caseof}
  \tcase{$|\tilde{\D}_i \cap \T_{i+1}|\geq k+1 \text{ and } l,r \geq k$\label{c:1}}{}
  \tcase{$|\tilde{\D}_i \cap \T_{i+1}|\geq k+2 \text{ and } l<k$\label{c:2}}{}
  \tcase{$|\tilde{\D}_i \cap \T_{i+1}|\geq k+2 \text{ and } r<k$\label{c:3}}{}
  \tcase{$|\tilde{\D}_i \cap \T_{i+1}|> 2k$\label{c:4}}{}
  \tcase{$\text{otherwise}$ $(\neg(\Cref{c:1}\lor\Cref{c:2}\lor\Cref{c:3}\lor\Cref{c:4}))$\label{c:5}}{}
  \end{caseof}

\Cref{c:1} $d_{i+1} \colon y_{i+1}=1 \rightarrow$
\[ \D_{i+1} = \Gamma_k(\D_i\cap \T_{i+1})\supseteq \Gamma_k(\tilde{\D}_i \cap \T_{i+1})\supseteq \{j_i-k,\dots,j_i,\dots,j_i+ 1+2k\}\]

$j_{i+1} = j_{i}- k$

\Cref{c:2} $d_{i+1} \colon y_{i+1}=1 \rightarrow$
\[ \D_{i+1} = \Gamma_k(\D_i\cap \T_{i+1})\supseteq \Gamma_k(\tilde{\D}_i \cap \T_{i+1})\supseteq
  \{1, 2, \dots, 3k + 1\} \]

\Cref{c:3} $d_{i+1} \colon y_{i+1}=1 \rightarrow$
\[ \D_{i+1} = \Gamma_k(\D_i\cap \T_{i+1})\supseteq \Gamma_k(\tilde{\D}_i \cap \T_{i+1})\supseteq
  \{ N-3k, N-3k+1, \dots, N \} \]

\Cref{c:4} $d_{i+1} \colon y_{i+1}=1 \rightarrow$
\[ \D_{i+1} = \Gamma_k(\D_i\cap \T_{i+1})\supseteq \Gamma_k(\tilde{\D}_i \cap \T_{i+1}) \]
$\mathrm{if} j_i \geq k + 1\colon \{ j_i-k, j_i-k+1, \dots, j_i + 2k\}$
$\mathrm{otherwise}  \{ j_i, j_i+1, \dots, j_i + 3k\}$

\Cref{c:5}
$d_{i+1} \colon y_{i+1}=0 \rightarrow |\D_i \cap \T_{i+1}| < k+1$\\
$ |\tilde{\D_i}=3k+1| \rightarrow |\tilde{\D_i} \setminus \T_{i+1}| \geq 3k+1-k = 2k+1$\\
if $l>k\colon$ else $r>k$\\
$3k > |\tilde{\D_i} \setminus \T_{i+1}| \geq 2k$\\
case 1: $|\tilde{\D_i} \setminus \T_{i+1}| \geq 3k$ trivial, since $4k+1-3k=k+1$(max gap)
and $k+1$ can be filled up with neighborhood relation.

\begin{enumerate}
\item In this case let $\T_{i+1}\cap \tilde{\D}_i=\{j_i,\dots, j_i+ k+1\}$, then
    \[
      \D_{i+1}=\Gamma(\T_{i+1}\cap \D_i)\supseteq \Gamma(\T_{i+1}\cap \tilde{\D}_i)\supseteq \{j_i-k,\dots,j_i,\dots,j_i+ 1+2k\} 
    \]
    and the claim holds for $i+ 1$ by choosing $j_{i+1}=j_i-k$. In this case there are enough vertices on the left and right so that the claim is fulfilled.
     \item In this case let $\T_{i+1}\cap \tilde{\D}_i=\{j_i,\dots, j_i+ 2k-l\}$, then \[
     \{1,2,\dots,3k+1\}\subseteq \D_{i+1}
    \]
    and the claim holds for $i+ 1$. In this case there are enough vertices on the right so that the claim is fulfilled.
    \item In this case let $\T_{i+1}\cap \tilde{\D}_i=\{j_i,\dots, j_i+ 2k-r\}$, then \[
     \{N-3k,N-3k+1,\dots,N\}\subseteq \D_{i+1}
   \]
    and the claim holds for $i+ 1$. In this case there are enough vertices on the left so that the claim is fulfilled.
    \item In this case let $\T_{i+1}\cap \tilde{\D}_i=\{j_i,\dots, j_i+ 2k\}$ if $j_i\geq k+1$, then \[
     \{j_i-k,j_i-k+1,\dots,j_i+2k\}\subseteq \D_{i+1}
    \]
    otherwise
    \[
     \{j_i,j_i+1,\dots,j_i+3k\}\subseteq \D_{i+1}
   \]
    and the claim holds for $i+ 1$. In this case there are enough vertices on the right or left so that the claim is fulfilled.
    \item[5.] If neither (1.), (2.), (3.), or (4.) hold, then 
    $|\tilde{\D}_i\setminus \T_{i+1}|\geq 2k$.
    
    Since $N\geq 4k+1$, we also have that 

if $3k> |\tilde{\D}_i\setminus \T_{i+1}|\geq 2k$, then 
$$\min\{  x\in \tilde{\D}_i\setminus \T_{i+1}\}>k ,$$
$$\max(\{\tilde{\D}_i\setminus \T_{i+1}\})< N-k+1.$$
In this case, the complementary set is large enough so that if the answer is "0", the claim is fulfilled.
\end{enumerate}

\begin{align*}
   \text{$y_{i+1}$} =
    \begin{cases}
    \text{$1$} & if\ 1.\ |\T_{i+1} \cap \tilde{\D}_i|\geq k+1 \text{ and } l,r \geq k
    \\
    \text{$1$} &  if\ 2.\ |\T_{i+1} \cap \tilde{\D}_i|+l \geq  2k+1 \text{ and }  l<k
    \\
    \text{$1$} & if\ 3.\ |\T_{i+1}\cap \tilde{\D}_i|+r\geq 2k+1 \text{ and } r<k
    \\
    \text{$1$} & if\ 4.\ |\T_{i+1} \cap \tilde{\D}_i|> 2k
    \\
    \text{$0$} & if\ 5.\  \text{otherwise}.
    \end{cases}
\end{align*}

\begin{figure}[ht]
  \centering
  \begin{tikzpicture}[scale=0.7, transform shape]
  \node[draw, align=left] (DcTleq2k) {If not \Cref{c:4}, then $|\tilde{\D}_i \cap \T_{i+1}| \leq 2k$ \\$|\tilde{\D}_i \cap \T_{i+1}|<k+1?$};
  \node[draw, align=left, below left= 1 and 1 of DcTleq2k] (DnTgt2k+1) {$|\tilde{\D}_i \setminus \T_{i+1}| > 2k+1$ \\$|\tilde{\D}_i \setminus \T_{i+1}|+k > 3k+1$};
  \node[draw, align=left, below right= 1 and -2 of DcTleq2k] (DcTeqk+1) {$|\tilde{\D}_i \cap \T_{i+1}| = k+1?$};
  \node[draw, align=left, below left= 1.5 and 2 of DcTeqk+1] (LRgeqk) {$l \geq k \land r \geq k?$};
  \node[draw, align=left, below right = 1 and -3 of DcTeqk+1] (DcTgeqk+2) {$|\tilde{\D}_i \cap \T_{i+1}| \geq k+2$ \\$l>k?$};
  \node[draw, align=left, below left= 1 and 0 of LRgeqk] (c1) {\Cref{c:1}$|\tilde{\D}_i \cap \T_{i+1}|=3k+1$\\$(k+1+2k)$};
  \node[draw, align=left, below right = 2.3 and -1 of LRgeqk] (Lltk) {$l < k?$};
  \node[draw, align=left, below left= 1.5 and -1 of Lltk] (c5) {$r = N - |\tilde{\D}_i \cap \T_{i+1}|, N \geq 4k+1$
    \\$ r \geq 3k+1 \curvearrowright \Cref{c:5}:$
    \\$ |\tilde{\D}_i \setminus \T_{i+1}| = 3k+1 -(k+1)= 2k$
    \\$ (\tilde{\D}_i \cap \T_{i+1})$ starts $<k$ from left
    \\ and ranges over $k+1$
    \\ $|\tilde{\D}_i| = 3k+1 \ |\tilde{\D}_i| - |\tilde{\D}_i \cap \T_{i+1}| = 2k$
    \\ $2k>k$ pos. counted from left for $|\tilde{\D}_i \setminus \T_{i+1}|$};
  \node[draw, align=left, below right= 2 and 1 of Lltk] (Rltk) {$r < k$
    \\ $ l \geq N - |\tilde{\D}_i \cap \T_{i+1}|,$
    \\$ N \geq 4k+1$
    \\$ r \geq 3k+1 \curvearrowright \Cref{c:5}$
  };
  \node[draw, align=left, below left= 2 and -0.5 of Rltk] (5concl) {$\curvearrowright$ enough space for $\Gamma_k$-expansion
    \\ on both sides};
  \node[draw, align=left, below= 1 of DcTgeqk+2] (Lgtk) {$l > k?$};
  \node[draw, align=left, below left= 0.5 and 0.2 of Lgtk] (c3) {$r<k$\\ \Cref{c:3}};
  \node[draw, align=left, below right= 0.5 and 0.2 of Lgtk] (c2) {$r>k$\\ \Cref{c:2}};
  \draw[->] (DcTleq2k)--(DnTgt2k+1) node[midway, below]{yes};
  \draw[->] (DcTleq2k)--(DcTeqk+1) node[midway, below]{no};
  \draw[->] (DcTeqk+1)--(LRgeqk) node[midway, below]{yes};
  \draw[->] (DcTeqk+1)--(DcTgeqk+2) node[midway, right]{no};
  \draw[->] (LRgeqk)--(c1) node[midway, right]{yes};
  \draw[->] (LRgeqk)--(Lltk) node[midway, right]{no};
  \draw[->] (Lltk)--(c5) node[midway, right]{yes};
  \draw[->] (Lltk)--(Rltk) node[midway, right]{no};
  \draw[->] (Lgtk)--(c3) node[midway, above left]{yes};
  \draw[->] (Lgtk)--(c2) node[midway, above right]{no};
  \draw[->] (c5)--(5concl) node[midway, below left]{yes};
  \draw[->] (Rltk)--(5concl) node[midway, below]{no};
  \draw[->] (DcTgeqk+2)--(Lgtk) node[midway, right]{};
\end{tikzpicture}
  \caption{Path through different cases of Proposition~\ref{pcase}}
  \label{case}
\end{figure}
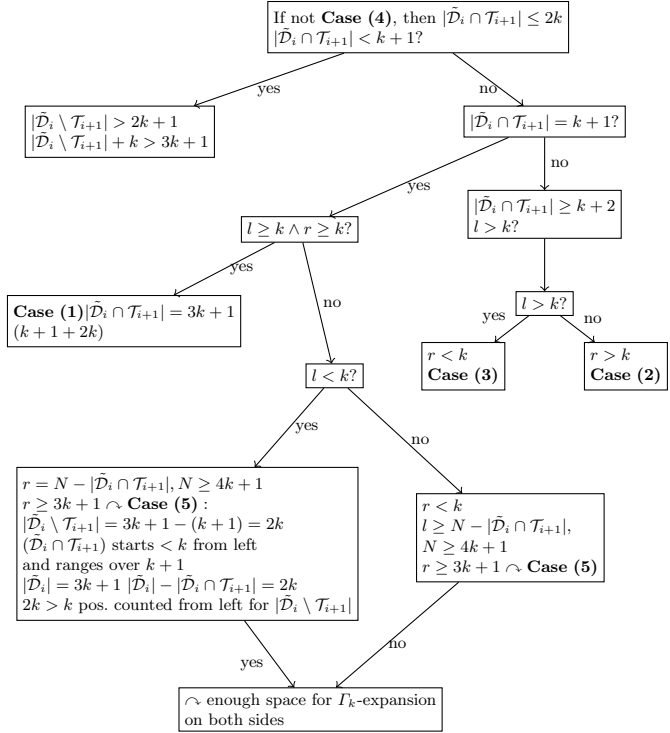

\end{proof}


\begin{proof}

%

%

\qed
\end{proof}
\begin{Theorem}
For $k\geq 1$, the minimum $s$ such that there exists a $(G,s)$-successful strategy is given by
\begin{align*}
    s^* = \begin{cases}
    N &  \text{ for } N \leq 2k+1
    \\
    \lceil\frac N2\rceil+k & \text{ for } 2k+1 < N < 4k+1
    \\
    3k+1 & \text{ for } N \geq 4k+1.
    \end{cases}
\end{align*}
\end{Theorem}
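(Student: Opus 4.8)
The plan is to handle the three regimes $N\le 2k+1$, $2k+1<N<4k+1$, and $N\ge 4k+1$ separately, proving a matching upper and lower bound on $s^{*}(\pn)$ in each. Throughout I would invoke the reduction to consecutive tests used in the proofs of Proposition~\ref{s5} and Proposition~\ref{pcase}, so that each $\D_i$ may be taken to be an integer interval (written $[a,b]$ for $\{a,\dots,b\}$), possibly a union of two end‑intervals when a central test is answered ``no''. The recurring tool is that $\Gamma_k$ commutes with unions, so for any test $\Gamma_k(\T\cap\D_{i-1})\cup\Gamma_k(\D_{i-1}\setminus\T)=\Gamma_k(\D_{i-1})$. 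For the lower bound in the third regime nothing new is needed: Proposition~\ref{pcase} already gives $s^{*}(\pn)\ge 3k+1$ for $N\ge 4k+1$.

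The upper bound $s^{*}(\pn)\le 3k+1$ for $N\ge 4k+1$ I would get from an explicit ``wall‑hugging'' strategy. Starting from $\D_0=[1,N]$, which is anchored at the right end, I keep $\D_i=[a_i,N]$ and test the suffix $\T=[a_i+k+1,N]$. A ``no'' answer confines the target to the short block $[a_i,a_i+k]$ of size $k+1$, whose $k$‑neighbourhood has size at most $3k+1$, so the search terminates; a ``yes'' answer gives $\D_{i+1}=\Gamma_k([a_i+k+1,N])=[a_i+1,N]$, reducing the size by exactly one while staying right‑anchored. Hence the adversary must answer ``yes'' until $\D$ shrinks to $[N-3k,N]$, of size $3k+1$, which together with Proposition~\ref{pcase} settles the third case.

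For $2k+1<N<4k+1$ put $T=\lceil N/2\rceil+k$. The upper bound follows from the single balanced test $\T=[1,\lfloor N/2\rfloor]$, whose two answers are intervals of sizes $\lfloor N/2\rfloor+k$ and $\lceil N/2\rceil+k$, both at most $T$ (one checks $k<\lceil N/2\rceil$, so no capping at $N$ occurs). The lower bound is the heart of the matter. The key observation is that for $N\le 4k$ one has $T\ge N-k$, so \emph{every} interval of size at least $T$ reaches within $k$ of both endpoints; consequently $\Gamma_k(\D_{i-1})=[1,N]$ whenever $|\D_{i-1}|\ge T$. I would then prove the invariant that the adversary can keep each $\D_i$ an interval with $|\D_i|\ge T$: by the union identity $|\Gamma_k(\T\cap\D_{i-1})|+|\Gamma_k(\D_{i-1}\setminus\T)|=N+|\,\text{overlap}\,|$, and a case distinction on where $\T$ meets $\D_{i-1}$ (a prefix/suffix split, a central split of intersection size $\le 2k$ forcing the ``no''‑answer to equal $[1,N]$, or a central split of intersection size $>2k$ which — since $N\le 4k$ — forces $\T\cap\D_{i-1}$ to touch a wall and hence the ``yes''‑answer to be an interval of size $\ge T$) shows that in each case one answer is an interval of size at least $T$. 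As $|\D_0|=N\ge T$, this gives $|\D_i|\ge T$ for all $i$, so $s^{*}(\pn)\ge T$.

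Finally, for $N\le 2k+1$ the claim $s^{*}(\pn)=N$ reduces to showing no test shrinks $\D_0=[1,N]$: for any consecutive $\T$, the constraint $N\le 2k+1$ forces either $\Gamma_k(\T)=[1,N]$ or $\Gamma_k([1,N]\setminus\T)=[1,N]$, because the window of split points that avoids both walls is empty precisely when $N\le 2k+1$; thus the adversary maintains $|\D_i|=N$. I expect the main obstacle to be the lower bound in the middle regime — making the case analysis on the position of $\T\cap\D_{i-1}$ genuinely exhaustive, handling the ceiling/parity so that $\max\ge T$ comes out exactly, and keeping steady use of the fact that a size‑$\ge T$ interval is automatically anchored at both ends.
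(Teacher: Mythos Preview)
Your proposal is correct, and the overall three-case split with Proposition~\ref{pcase} supplying the hard lower bound matches the paper. Two pieces differ in substance. For the upper bound when $N\ge 4k+1$, the paper uses a two-phase ``shifting'' algorithm: first the halving test $\T_1=\{1,\dots,\lceil N/2\rceil\}$, then a sliding window of size $k+1$ that moves one step per round until the remaining set has size $3k+1$. Your wall-hugging suffix strategy is simpler (one uniform rule, no phase change) and equally valid here, since only $s^*$ is at stake and not the number of tests; the paper's algorithm, on the other hand, is the seed of the later Theorem on $N_p(n,3k+l)$, so it does double duty there. For the middle-range lower bound, the paper argues somewhat ad hoc: it fixes $\D_1=\{1,\dots,\lceil N/2\rceil+k\}$ after an optimal first test and then checks, by a case split on the shape of $\T_2$ (prefix, suffix, interior), that $|\D_2|\ge|\D_1|$. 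Your formulation as an explicit adversary invariant, driven by the single structural observation that $T=\lceil N/2\rceil+k\ge N-k$ forces any size-$\ge T$ interval to have $\Gamma_k$ equal to $[1,N]$, is cleaner and makes the induction honest for all $i$, not just $i=2$. One wording to tighten: in your large-central-intersection case it is $\Gamma_k(\T\cap\D_{i-1})$, not $\T\cap\D_{i-1}$ itself, that must touch a wall (since a genuinely central $\T\cap\D_{i-1}$ avoids $1$ and $N$); your size count $(d-c+1)+2k>4k\ge N$ shows exactly that capping must occur, and then the bound $\ge T$ follows.
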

\begin{proof}
The first case is obvious, as after each test we can choose the result when $|\D_i|=N$  .

The last case follows from Proposition 2 as well as the following algorithm (called "shifting algorithm"). $$\T_1=\{1,2,\dots \lceil\frac N2\rceil\}.$$

Without loss of generality, we assume that the answer to the first test of the shifting algorithm is 1.
If $N<4k+3$, we can stop and give the final answer with $s=3k+1$, otherwise
for $1<i\leq \lceil\frac N2\rceil\ -2k$, we choose
$$\T_i= \{\lceil\frac N2\rceil\ + (2-i),\lceil\frac N2\rceil\ + (2-i)+1,\dots \lceil\frac N2\rceil\ + (2-i)+k\}.$$ 
We have $|\T_i|=k+1$. As soon as we get the test result $1$, we can stop the algorithm
and $s=3k+1$. If all test results are $0$, $|\D_{\lceil\frac N2\rceil\ -2k}|=\{1,2, \dots 3k+1\}$. Thus we always have $s=3k+1.$

The case for $2k+1 < N < 4k+1$ we can get from a 1-test strategy (when $\T_1=\{1,2,\dots \lceil\frac N2\rceil\}$). For such $N$ we have $\lceil\frac N2\rceil+k < 3k+1$. We can't improve the result with more tests. Let $D_1=\{1,2,\dots \lceil\frac N2\rceil+k\}$. For $T_2=\{1,2,..z\}$ or $T_2=\{\lceil\frac N2\rceil+k, \lceil\frac N2\rceil+k-1,\dots \lceil\frac N2\rceil+k-z+1\}$, we have the following variations.

If $z\leq k$, then the answer is $0$ and $|\D_2|\geq |\D_1|$. Otherwise, we have only 2 variations.

If $z\geq \lceil\frac N2\rceil$, then the test result is $1$ and $|\D_2|\geq |\D_1|$. Otherwise, the result is $0$ and $$|\D_2|\geq \min\{\lceil\frac N2\rceil+k, 3k\} \geq |\D_1|.$$

Finally, let $T_2$ do not include the vertices $1$ and $\lceil\frac N2\rceil$. Then if $|\T_2 \cap \D_1|\leq 2k$ then the test result is $0$ and $|\D_2|\geq \min\{|\D_1|+k,N\}$. Otherwise, the test result is $1$ and $|\D_2|\geq \min\{3k,N\}$.

Obviously all variations without connecting sets are worse.
\qed

\end{proof}


For $k>1$ we have to consider two cases.
First we consider the case if $3k+1\leq s\leq 4k$. In this case, a shifting algorithm will be optimal.
\begin{Theorem}
Let $n\geq 1$ and $1\leq l\leq k$. Then the maximum $N$, such that there exists a $(P_N,s)$-successful strategy with $n$ tests is given by
\[ N_p(n,3k+l)\geq 2nl+4k. \]
\end{Theorem}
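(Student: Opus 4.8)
The plan is to prove the inequality constructively, by exhibiting an adaptive $(\pn,3k+l)$-successful strategy that uses $n$ tests for $N = 2nl+4k$. Since $N$ is even, I would set $m = N/2 = nl+2k$ and take the first test to be the balanced split $\T_1 = \{1,\dots,m\}$. Because $\D_0 = \{1,\dots,N\}$ touches \emph{both} endpoints of the path, each answer leaves a set anchored at a single boundary, where one side of the $\Gamma_k$-expansion is blocked: a ``$1$'' gives $\D_1 = \Gamma_k(\{1,\dots,m\}) = \{1,\dots,m+k\}$, and a ``$0$'' gives $\D_1 = \Gamma_k(\{m+1,\dots,N\}) = \{nl+k+1,\dots,N\}$. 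Both have size exactly $m+k = nl+3k$, so by the reflection symmetry $i \mapsto N+1-i$ of $\pn$ I may assume without loss of generality the left-anchored case $\D_1 = \{1,\dots,nl+3k\}$.

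Next I would spend the remaining $n-1$ tests on ``shifting'' steps that slide a window of size $k+l$ against the anchored boundary. Maintaining the invariant $\D_i = \{1,\dots,L_i\}$ with $L_1 = nl+3k$, I take the $(i{+}1)$-th test to be the rightmost $(k+l)$-block $\T_{i+1} = \{L_i - k - l + 1,\dots,L_i\}$. If the answer is $1$, then $\D_{i+1} = \Gamma_k(\T_{i+1})$ is an interval of size at most $3k+l$, so the strategy terminates with accuracy $s = 3k+l$. If the answer is $0$, then $\D_{i+1} = \Gamma_k(\{1,\dots,L_i-k-l\}) = \{1,\dots,L_i-l\}$, since the left endpoint blocks expansion on the left while only the right end grows by $k$; hence $L_{i+1} = L_i - l$, a clean reduction of exactly $l$ per missed test. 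Applying $n-1$ such reductions to $L_1 = nl+3k$ yields $L_n = nl+3k-(n-1)l = l+3k = s$, so even the worst branch (balanced first answer, then all $0$'s) ends with $|\D_n| = s$. I would check the slide arithmetic once and note the base case $n=1$, where the single balanced split already gives $|\D_1| = 3k+l$.

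For the mere inequality $N_p(n,3k+l)\ge 2nl+4k$ no optimality argument is needed, so the real obstacle is \emph{designing} the right strategy rather than verifying it. The pitfall to avoid — and the point I expect to need the most care — is the temptation to keep halving: halving is powerful only while the candidate interval is anchored at two boundaries, but after the first test the set touches just one endpoint, so a second balanced split re-expands by $k$ on the interior side, and iterating drives the size toward the fixed point $4k$ rather than below it. Since the target accuracy $3k+l \le 4k$ sits at or below that threshold — precisely the regime in which the boundary argument of Proposition~\ref{pcase} forces $s^*(\pn)\ge 3k+1$ — the final descent below $4k$ can only come from cornering the target against an endpoint, and each slide with budget $k+l$ gains exactly $l$. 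This is what produces the linear shape $2nl+4k$: one two-sided split worth $nl+k$, followed by $n-1$ one-sided slides worth $l$ each, for the total reduction $N-s = 2nl-l+k$.
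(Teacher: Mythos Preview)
Your proposal is correct and follows essentially the same approach as the paper: a balanced first split followed by a sliding window of size $k+l$ against the anchored boundary, with each ``$0$'' answer shrinking the candidate interval by exactly $l$. Your invariant $L_{i+1}=L_i-l$ and the explicit arithmetic $L_n=l+3k$ make the argument cleaner than the paper's presentation, and your tests $\T_{i+1}=\{L_i-k-l+1,\dots,L_i\}$ coincide literally with the paper's $\T_i=\{\lceil N/2\rceil-(i-1)l+1,\dots,\lceil N/2\rceil+k-(i-2)l\}$ once one substitutes $L_i=nl+3k-(i-1)l$.
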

\begin{proof}
The idea of the proof is to take a "sliding window" of size $l+k$ after the first test and move it $l$ steps to the left or right 
depending on the test result of the first test. We let the first test be defined as $\T_1=\{1,2,\dots \lceil\frac N2\rceil\}$. W.l.o.g. we can assume that the test result is $1$ or the test result is $0$ and $\lceil\frac  N2\rceil \leq 2k+l$ and then we stop. Otherwise, \[ \T_2=\{\lceil\frac N2\rceil-(l-1),\lceil\frac N2\rceil -(l-2), \dots \lceil\frac N2\rceil+k\}.\]
Now if the 2nd test result is $1$ or the test result is $0$ and 
$\lceil\frac N2\rceil-l\leq 2k+l$, again we stop.
Otherwise, \[
\T_3=\{\lceil\frac N2\rceil-2l+1,\lceil\frac N2\rceil-2l+2, \dots \lceil\frac N2\rceil+k-l\}.
\]
Now if the 3rd test result is $1$ or the test result is $0$ and $\frac N2-2l\leq 2k+l$, again we stop. Therefore, we can repeat the procedure, and in general we have \[
\T_i=\{\lceil\frac N2\rceil-(i-1)l+1,\lceil\frac N2\rceil-(i-1)l+2, \dots \lceil\frac N2\rceil+k-(i-2)l\}
\]
for $i>1$. If the $i$-th test result is $1$ or the test result is $0$ and 
$\lceil\frac N2\rceil-(i-1)l\leq 2k+l$, we stop.
As a result, after $n$ steps we will successfully terminate if $N=2nl+4k$.
\end{proof}
Now we consider the case
$s\geq 4k+1$ and characterize the optimal strategies for the path graph. We start with the upper bound.
\begin{Theorem}\label{T4}
Let $k\geq 1$ for $n\geq 0$ and $s\geq 4k$. Then for the maximum $N$, such that there exists a $(P_N,s)$-successful strategy with $n$ tests, it holds that
\[ N_p(n,s)\leq (s-4k)2^{n}+k(2n+4). \]
\end{Theorem}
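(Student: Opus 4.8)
The plan is to prove the upper bound by an adversary argument: I will show that whenever $N>(s-4k)2^{n}+k(2n+4)$, the target can answer every test so that the set of possible positions never shrinks to size $\le s$, and hence no $n$-test strategy is $(P_N,s)$-successful. As in Proposition~\ref{pcase} and the cited work, I first reduce to \emph{consecutive} tests, so that every $\D_i$ is an interval $[a_i,b_i]\subseteq\ton$; this is legitimate because replacing a test by a consecutive one of the same cardinality never decreases $\max\{|\Gamma_k(\T\cap\D)|,|\Gamma_k(\D\setminus\T)|\}$. I then argue by induction on the number of tests, with the trivial base case $n=0$, where success requires $N=|\D_0|\le s$.

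The heart of the matter is that the gain of a test depends on how the current interval sits relative to the two ends of the path: the $k$-neighbourhood enlarges an interval by $2k$ in the interior, but only by $k$ on a side abutting an endpoint. I will therefore track, besides $|\D_i|$, the distances $l_i=a_i-1$ and $r_i=N-b_i$ to the two ends, splitting the induction into three regimes according to whether the interval touches $0$, $1$, or $2$ ends. Writing $M_{\mathrm{free}}(j),M_{\mathrm{wall}}(j),M_{\mathrm{both}}(j)$ for the largest interval of each regime that a searcher can reduce to size $\le s$ in $j$ tests, balancing the two branches yields the recursions
\begin{align*}
M_{\mathrm{free}}(j) &= 2M_{\mathrm{free}}(j-1)-4k,\\
M_{\mathrm{wall}}(j) &= M_{\mathrm{wall}}(j-1)+M_{\mathrm{free}}(j-1)-3k,\\
M_{\mathrm{both}}(j) &= 2M_{\mathrm{wall}}(j-1)-2k,
\end{align*}
each equal to $s$ at $j=0$. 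Solving them gives $M_{\mathrm{free}}(j)=(s-4k)2^{j}+4k$ (the cycle value of Theorem~\ref{cycle}, as expected in the interior), $M_{\mathrm{wall}}(j)=(s-4k)2^{j}+kj+4k$, and $M_{\mathrm{both}}(j)=(s-4k)2^{j}+k(2j+4)$. Since $\D_0=[1,N]$ touches both ends, the theorem is exactly the inequality $N_p(n,s)\le M_{\mathrm{both}}(n)$, and for this I only need the upper (adversary) half of each recursion.

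For the inductive step I take any interval $I$ of a given regime with $|I|$ exceeding its threshold together with an arbitrary consecutive test $\T$. The two answers produce the candidate sets $\Gamma_k(I\cap\T)$ and $\Gamma_k(I\setminus\T)$, whose sizes are the two pieces of $|I|$ enlarged by $2k$ on an interior side and by $k$ on an end side. I will verify that, for \emph{every} placement of $\T$, at least one candidate exceeds the threshold of \emph{its own} regime: for the both-ends case the two candidates abut opposite ends, and if both had size $\le M_{\mathrm{wall}}(j-1)$ then $|I|+2k\le 2M_{\mathrm{wall}}(j-1)$, i.e. $|I|\le M_{\mathrm{both}}(j)$, contradicting $|I|>M_{\mathrm{both}}(j)$; the one-wall and interior cases are analogous using the corresponding recursions. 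The adversary answers so as to realise such a candidate, and the induction continues. Because every threshold satisfies $M_{\bullet}(j)\ge s$, the size also stays $>s$ at every intermediate step, so no early success at some $i<n$ is possible either.

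\textbf{The main obstacle} is making the regime bookkeeping airtight, and in particular ruling out that the searcher profits from an interval that does \emph{not} touch an end but lies within distance $k$ of it. Such a near-wall interval can have a child whose $\Gamma_k$-image abuts the end, which raises that child's threshold from $M_{\mathrm{free}}$ to $M_{\mathrm{wall}}$ and could a priori let the searcher reduce an interval larger than $M_{\mathrm{free}}(j)$. I expect to neutralise this by a domination estimate: positioning a child exactly at distance $k$ forces the complementary, end-abutting child to have the small size $3k$, so this option handles intervals only up to roughly $M_{\mathrm{wall}}(j-1)-k$, whereas the balanced split reaches $M_{\mathrm{wall}}(j-1)+M_{\mathrm{free}}(j-1)-3k$; the inequality $M_{\mathrm{wall}}(j-1)-k\le M_{\mathrm{wall}}(j-1)+M_{\mathrm{free}}(j-1)-3k$ holds because $M_{\mathrm{free}}(j-1)\ge s\ge 4k\ge 2k$, so the straightforward split is never worse. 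Pinning down this domination is exactly what fixes the constants $-4k,-3k,-2k$ and hence the coefficient $2kn$ of the linear term. The remaining routine points are the parity handling of the $\lceil\cdot\rceil$ arising from halving, and the degenerate highly unbalanced splits (where a candidate may touch two ends or be trivially small), both of which only make the adversary's choice easier.
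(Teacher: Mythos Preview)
Your approach differs from the paper's and, as written, has a gap in the place you did \emph{not} flag.

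The paper's proof is much shorter: the adversary tracks a sub-interval $A_i\subseteq\D_i$ that starts as the \emph{central} block $A_0=\{nk+1,\dots,nk+2^{n}(s-4k)+4k+1\}$ of $[1,N]$, hence at distance $nk$ from each endpoint. Since one application of $\Gamma_k$ moves the extremes of $A_i$ by at most $k$, $A_i$ stays at distance $\ge(n-i)k$ from both ends for every $i\le n$; thus $\Gamma_k$ always contributes a full $+2k$, and the single recursion $|A_i|\ge\lceil|A_{i-1}|/2\rceil+2k$ (your $M_{\mathrm{free}}$ recursion, nothing else) already gives $|A_n|\ge s+1$. No regimes, no near-wall analysis, no reduction to interval tests.

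The gap in your argument is in the ``both'' regime. Your reduction to consecutive tests does \emph{not} make every $\D_i$ an interval: if $\D_{i-1}=[1,N]$ and $\T_i=[a,b]$ with $1<a\le b<N$, then on answer $0$ one gets $\D_i=[1,a{-}1{+}k]\cup[b{+}1{-}k,N]$. Your recursion $M_{\mathrm{both}}(j)=2M_{\mathrm{wall}}(j-1)-2k$ tacitly assumes $\T_i$ is an initial or final segment, so that the two children are wall intervals with sizes summing to $|I|+2k$. For a genuine middle test the adversary's single-interval options are $Y=|\T_i|+2k$ (free) and the two wall pieces $L=a-1+k$, $R=N-b+k$, which satisfy $Y+L+R=N+4k$; hence ``all three below threshold'' only yields
\[
N\le M_{\mathrm{free}}(j-1)+2M_{\mathrm{wall}}(j-1)-4k
   = M_{\mathrm{both}}(j)+(s-4k)2^{\,j-1}+2k,
\]
which is no contradiction. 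Concretely, for $N=M_{\mathrm{both}}(j)+1$, $s>4k$, $j\ge2$, the choice $|\T_i|=2k+1$ with $L=R=M_{\mathrm{wall}}(j-1)-k$ makes all three options fail simultaneously. This, not the near-wall phenomenon you identified, is the real obstacle; it can be repaired (middle tests with $|\T_i|\le 2k$ are wasted on ``no'', and for larger ones one must argue that the two-piece $\D_i$ is no easier for the searcher than a single wall interval), but that extra case analysis is exactly what the paper's central-interval trick sidesteps.
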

\begin{proof}
Fix $s\geq 4k$. Skipping trivialities, let us assume that $n > 0$.
Let $N=2^n(s-4k)+2kn+4k+1$ and $S$ be any testing strategy with $n$ tests.
We shall describe an adversary
strategy (i.e., a sequence of test results) proving that $S$ is not a successful strategy.
Let $A_0 =\{nk+1,nk+2,\dots, nk+2^n(s-4k)+4k+1\}$. 
Notice that $N\setminus A_0$ does not include less than $nk$ positions on each side of $A_0$.
Let $\T_1$ be the first test in the strategy $S$. 
%
\begin{align*}
    \text{$y_1$} =
    \begin{cases}
    \text{$0$} & \text{ if } |\Gamma(\T_1)\cap A_0|< |\Gamma(A_0\setminus \T_1)|
    \\
    \text{$1$} & \text{ otherwise}
    \end{cases}
\end{align*}
%
Let $A_1 = \max\{(\Gamma(\T_1)\cap A_0),\Gamma(A_0\setminus \T_1)\}$.
Since on both sides of $A_0$ there are $nk$ positions (i.e., in particular, $A_0$ does not include $1$ nor $N$), it follows that 
\[
|A_1|\geq \left\lceil \frac {|A_0|}2 \right\rceil + 2k \geq 2^{n-1}(s-4k)+4k+1.
\]
Notice also that between the leftmost (rightmost) element of $A_1$ and the leftmost
(rightmost) element of $N$ there are at least $(n-1)k$ positions.
In addition, we have $\D_1\supseteq A_1$. Hence, after the first test, there is a possible
outcome for which the size of the set of candidate positions for the target is at
least $2^{n-1}(s-4k)+4k+1$. So if $n$ was $1$, this would be proof that strategy $S$ is not successful.

In general, the adversary's strategy is described as follows:
\begin{align*}
    \text{$y_i$} =
    \begin{cases}
    \text{$0$} & \text{ if } |\Gamma(\T_i)\cap A_{i-1})|< |\Gamma(A_{i-1}\setminus \T_i)|
    \\
    \text{$1$} & \text{ otherwise},
    \end{cases}
\end{align*}
for $i = 1,2,\dots, n$.

 Let $A_i = \max\{(\Gamma(\T_i)\cap A_{i-1}),\Gamma(A_{i-1}\setminus \T_i)\}$.
Then, as before, we have 
\[
|A_i|\geq \left\lceil \frac {|A_{i-1}|}2 \right\rceil + 2k \geq 2^{n-i}(s-4k)+4k+1
\]
and between
the leftmost (rightmost) element of $A_i$ and the leftmost (rightmost) element of $N$
there are at least $k(n-i)$ positions. This is important for guaranteeing the invariant
$|A_i|\geq \left\lceil \frac {|A_{i-1}|}2 \right\rceil + 2k$
as long as $i\leq n$. 
Finally, we have $\D_i\supseteq A_i$.
Therefore, after $n$ tests, we have $|\D_n|\geq s+1$, which means that the strategy $S$ is not successful.
\qed
\end{proof}
We will now show a tied lower bound.
\begin{Theorem}\label{T5}
For $n\geq 0$ and $s\geq 4k$, the maximum $N$, such that there exists a $(P_N,s)$-successful strategy with $n$ tests, is given by 

\[ N_p(n,s)\geq (s-4k)2^{n}+k(2n+4). \]
\end{Theorem}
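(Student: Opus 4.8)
The plan is to exhibit an adaptive strategy and prove by induction on the number of tests that it reduces $\D_0=\N$ to a set of size at most $s$ on the path $\pn$ with $N=(s-4k)2^{n}+k(2n+4)$. The key idea is that, unlike on the cycle, an interval of candidate positions that abuts an endpoint of the path expands by only $k$ (not $2k$) under $\Gamma_k$, and this saving is exactly what accounts for the extra $2kn$ vertices over the cycle bound $N_c(n,s)=(s-4k)2^{n}+4k$ of Theorem~\ref{cycle}. To organize this I would classify every candidate interval by how it sits in the path and track, for each number $t$ of remaining tests, the largest interval of each type that can still be reduced to size $\le s$: let $H(t)$ be this maximum for an \emph{interior} interval (touching neither endpoint), $G(t)$ for an interval touching exactly one endpoint, and $F(t)$ for one spanning the whole path $\{1,\dots,N\}$.

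First I would establish three recurrences by specifying, in each configuration, the test that balances the two possible answers. For an interior interval $\{a,\dots,b\}$ the test $\{a,\dots,\lfloor(a+b-1)/2\rfloor\}$ splits it into two interior intervals, each expanding by $2k$, giving $H(t)=2H(t-1)-4k$, hence $H(t)=(s-4k)2^{t}+4k$, i.e.\ exactly the cycle quantity. For an interval $\{1,\dots,m\}$ touching the left endpoint, testing a left sub-interval $\{1,\dots,c\}$ produces a ``yes''-branch $\Gamma_k(\{1,\dots,c\})=\{1,\dots,c+k\}$ that still touches the endpoint and a ``no''-branch $\{c+1-k,\dots,m+k\}$ that is interior; routing these to $G(t-1)$ and $H(t-1)$ and optimizing $c$ yields $G(t)=G(t-1)+H(t-1)-3k$, which solves to $G(t)=(s-4k)2^{t}+kt+4k$. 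Finally, for the full path $\{1,\dots,N\}$ the test $\{1,\dots,\lceil N/2\rceil\}$ sends both answers to endpoint-touching intervals (``yes'' to $\{1,\dots,\lceil N/2\rceil+k\}$ on the left, ``no'' to the symmetric interval on the right), giving $F(t)=2G(t-1)-2k=(s-4k)2^{t}+2kt+4k$. Since $F(n)=(s-4k)2^{n}+k(2n+4)=N$, running this adaptive strategy from $\D_0=\N$ would prove the bound.

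The main points to verify carefully are the invariants that make the endpoint savings legitimate. The crucial observations are: (a) $\D_0$ spans both endpoints, so the first test genuinely yields two one-endpoint intervals; (b) an endpoint-touching branch is always constructed as $\{1,\dots,\cdot\}$ or $\{\cdot,\dots,N\}$, so it literally abuts the wall and the $+k$ expansion is exact; and (c) the interior branches only ever invoke the universal bound ``expansion by at most $2k$ per step'', which holds regardless of position, so if such a branch happens to run into the far wall this only decreases its size and never breaks the recurrence. I would also check the base cases $F(0)=G(0)=H(0)=s$ and note that, for $N=F(n)$ exactly, every split point computed above is an integer, so no rounding analysis is needed.

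The only real bookkeeping is to confirm that along every root-to-leaf branch of the decision tree the regime transitions compose correctly---full $\to$ endpoint, endpoint $\to$ (endpoint or interior), interior $\to$ interior---so that after exactly $n$ tests the size telescopes down to $H(0)=s$ or $G(0)=s$. I expect this to follow routinely once the three recurrences are in place, and I note that optimality of this particular construction need not be argued separately, since Theorem~\ref{T4} already supplies the matching upper bound $N_p(n,s)\le (s-4k)2^{n}+k(2n+4)$.
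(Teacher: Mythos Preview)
Your proposal is correct and follows essentially the same approach as the paper: the paper introduces an ``open'' variant $N^O$ (target on an infinite path) and a ``half-open'' variant $N^H$ (target on a semi-infinite path), proves $N^O(n,s)\ge 2^n(s-4k)+4k$ and $N^H(n,s)\ge 2^n(s-4k)+(n+4)k$ by exactly the splittings you describe, and then reduces the full path to two half-open instances via the first test---your $H$, $G$, $F$ are precisely these three quantities, with identical recurrences and closed forms. The only cosmetic difference is that the paper places the auxiliary problems on genuinely (semi-)infinite paths, whereas you keep everything inside $P_N$ and note that hitting a wall can only shrink $\D_i$; both viewpoints justify the same $+2k$ versus $+k$ expansion bookkeeping.
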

\begin{proof}
We consider two variants of the problem and then reduce the original problem
to them. The first variant (later referred to as variant O, for open) arises when
we consider the search space open on both sides. More precisely, we assume that
$\D_0 =\{a + 1,\cdots, a + x\}$ for some $a\in\ZZ$ and $x\in\NN$ and the target can move on
any position in $\ZZ$, i.e., there is no boundary at $a$ or $a + x$, in the sense that from
position $a + x$ the target can move to position $a + x + i$ with $1\leq i\leq k$ too, 
and from position
$a + 1$ it can also move to position $a-i$ with $0\leq i\leq k-1$. This is different from the problem we fixed at
the beginning, since when the target is in $1$ (resp. in $N$), if it moves, it can only
move to $2$ (resp. $N-1$).
\setcounter{Claim}{0}
\begin{Claim}
Let $N^O(n,s)$ denote the largest value of $x$ such that there is a strategy with $n$ tests which allows us to locate the target within the accuracy
$s$ on an infinite path, assuming that the target starts in some initial sub-path of $x$ vertices $\{a + 1, \cdots, a + x\}$. 
\[
N^O(s,n)\geq 2^n(s-4k)+4k
\]
\end{Claim}
The base case $n = 0$ is trivially true. For the induction step, let $n \geq 1$ and
$\D_0 = \{a + 1,\dots,a + 2^n(s-4k) + 2k\}$. Using the first test 
$$\T_1 = \{a + 1,\dots,a + 2^{n-1}(s-4k) + 2k\},$$ we have that either 
$$\D_1 = \{a + 1-k,\dots,a + 2^{n-1}(s-4k) + 3k\}$$ or
$$\D_1 = \{a + 2^{n-1}(s-4k) + 1+k, \dots,a + 2^{n}(s-4k) + 5k\}.$$
In both cases we
have that $|\D_1| = 2^n(s-4k)+4k$. Hence, by the induction hypothesis,
$n-1$ additional tests are sufficient for a successful strategy starting from $\D_1$. Thus
$n$ tests are sufficient for a successful strategy starting from $\D_0$; i.e., we have shown
$N^O(s,n)\geq 2^n(s-4k)+4k$, concluding the proof of Claim~1.

As a second variant of the problem,
we consider the case where the search space is half-open (later referred to as variant H). We assume that
$\D_0 = \{1,\dots,x\}$ for some $x\in\NN$ and the target can move on any position in $\NN$, i.e.,
it can never move to a position to the left of 1 but it can move to a position to the
right of $x$, meaning, positions $x+1, x+2,\dots$ might become possible candidates later on.
By induction we can prove the following.
\begin{Claim}
Let us denote by $N^H(n, s)$ the largest value of $x$ such that there is a strategy with $n$ tests which allows us to locate the target within accuracy $s$ on a semi-infinite path $\{1,2,\dots\}$, assuming that the target starts on the initial sub-path of $x$ vertices $\{1,\dots,x\}$.  
\[
N^H(n,s)\geq 2^n(s-4k)+(n+4)k
\]
\end{Claim}
The base case $n = 0$ is trivially true. For the induction step, let $n\geq 1$ and
$\D_0 = \{1,\dots,2^n(s-4k)+k(n+4)\}$. Using the first test
\begin{align*}
     \T_1 = \{ 1,\dots,2^{n-1}(s-4k)+k(n+2)\},
\end{align*}
it follows that either
\begin{align*}
    \D_1 = \{ 1,\dots,2^{n-1}(s-4k)+k(n+3)\}
\end{align*}
or
\begin{align*}
    \D_1 = \{ 2^{n-1}(s-4k)+k(n+1)+1, \dots , 2^n(s-4k)+k(n+5)\}.
\end{align*}
In the first case we have that
$|\D_1|\leq 2^{n-1}(s-4k)+k((n-1)+4)\leq N^H(n-1,s)$, hence by induction $n-1$ additional
tests are sufficient for a successful strategy starting from $\D_1$. In the second case, we
have $|\D_1|=2^{n-1}(s-4k)+4k\leq N^O(n-1,s)$, and even allowing the new search space
to be open, we can finish the search with a strategy of size $n-1$, by Claim~1.
In both cases, $n$ tests are sufficient for a successful strategy starting from $\D_0$, hence the inductive step is established. This completes the proof of Claim~2.

Now we address the proof for the lower bound, i.e., $$N_p(n,s)\geq (s-4k)2^{n}+k(2n+4).$$
The case $n = 0$ is trivially true. For $n\geq 1$, let 
$\D_0 = \{1,\dots,2^n(s-4k)+k(2n+4)\}$ and
define 
$\T_1 = \{ 1,\dots,2^{n-1}(s-4k)+k((n-1)+3)\}$.
We have that either 
$\D_1 = \{ 1,\dots,2^{n-1}(s-4k)+k((n-1)+4)\}$
or 
$\D_1 = \{ 2^{n-1}(s-4k)+k((n-1)+2)+1, \dots , 2^n(s-4k)+k(2n+4)\}$.
In both cases we have 
$|\D_1|\leq N^O(n-1,s)$ and we can finish in $n-1$ tests
by Claim~2, using a strategy for the half-open variant defined above. Notice that
in both cases, the resulting set of candidate positions for the target can extend
only in one direction, like in an instance of the half-open variant. 
Hence $N_p(n,s)\geq (s-4k)2^{n}+k(2n+4)$, as desired.
\qed
\end{proof}
Using Theorem~\ref{T4} and Theorem~\ref{T5}, we obtain the following.
\begin{Corollary}
For $n\geq 0$ and $s\geq 4k$, the maximum $N$, such that there exists a $(P_N,s)$-successful strategy with $n$ tests, is given by 
\[ N_p(n,s)= (s-4k)2^{n}+k(2n+4). \]
\end{Corollary}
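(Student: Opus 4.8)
The plan is to obtain the exact value of $N_p(n,s)$ simply by sandwiching it between the two one-sided bounds that have already been established under exactly the same hypotheses. Theorem~\ref{T4} supplies the upper bound $N_p(n,s)\leq (s-4k)2^{n}+k(2n+4)$ through an adversary argument, and Theorem~\ref{T5} supplies the lower bound $N_p(n,s)\geq (s-4k)2^{n}+k(2n+4)$ through the explicit reduction to the open and half-open variants. Since both are valid for all $k\geq 1$, $n\geq 0$, and $s\geq 4k$, and the two expressions are literally identical, the inequalities force
\[
N_p(n,s)=(s-4k)2^{n}+k(2n+4),
\]
which is precisely the claimed equality. No further computation is required.

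The only point I would verify is that the hypotheses of the two theorems coincide with those of the corollary, so that the sandwiching is legitimate over the entire stated range; inspecting the statements, all of them require exactly $n\geq 0$ and $s\geq 4k$ (with $k\geq 1$ fixed throughout), so there is no gap to patch. Consequently there is no genuine obstacle at the level of the corollary itself: the substantive work was front-loaded into making the additive correction term $k(2n+4)$ agree on both sides. On the lower-bound side the construction had to carry the extra $kn$ margin so that each test could still halve the candidate set while absorbing the $2k$ spread caused by the target's movement, and on the upper-bound side the adversary had to start the target with $nk$ free positions on each side of $A_0$ so that proximity to the endpoints $1$ and $N$ never aided the searcher. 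The corollary merely records that these two carefully matched arguments meet exactly, pinning down $N_p(n,s)$.
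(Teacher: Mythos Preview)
Your proposal is correct and matches the paper's own argument exactly: the corollary is obtained simply by combining the upper bound of Theorem~\ref{T4} with the lower bound of Theorem~\ref{T5}, both valid for $k\geq 1$, $n\geq 0$, $s\geq 4k$. Your additional commentary on why the $k(2n+4)$ terms align is accurate but not needed for the corollary itself.
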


\section{Non-Adaptive Two-Sided Search on a Path Graph}

In this section we give an optimal non-adaptive strategy
for the search in a path graph. In this section we assume that
$k=1$ which means the object can only make movements of size $1$.
Let $P_N=(\N,\E)$ be a path graph on $N$ vertices, where the set of edges is given by
\begin{gather*}
    \E=\{(i,i+1)\}_{i\in\N\backslash N}~\bigcup~ \{(i,i)\}_{1 \leq i \leq N}.
\end{gather*}
It was shown in \cite{ACDL13} that for an adaptive strategy we have $s^*(P_N)=4$, and thus the minimum $n$ such that there exists a successful strategy with $n$ tests is given by
\begin{equation}
\label{ADK1}
n_A(P_N,4)= \left\lceil\frac N2\right\rceil -2. 
\end{equation}
We will show now that there exists a non-adaptive strategy, which needs the same number of tests as an optimal adaptive strategy. Therefore this strategy is also optimal. We first give an example for $N=16$. We present a strategy that needs $\left\lceil\frac N2\right\rceil -2=6$ tests and therefore is optimal.
\begin{Example}
Consider a non-adaptive strategy $S(n,N)=A_{n,N}=\{a_{ij}\}$ where $n=6$ and $N=16$ as follows
\setcounter{MaxMatrixCols}{20}
\[
\begin{pmatrix}
0 & 0 & 0 & 0 & 0 & 0 & 0 & 0 & 1 & 1 & 1 & 1 & 1 & 1 & 1 & 1\\ 
0 & 0 & 0 & 0 & 0 & 0 & 0 & 1 & 1 & 0 & 0 & 0 & 0 & 0 & 0 & 0\\ 
0 & 0 & 0 & 0 & 0 & 0 & 1 & 1 & 0 & 0 & 1 & 1 & 1 & 1 & 1 & 1\\ 
0 & 0 & 0 & 0 & 0 & 1 & 1 & 0 & 0 & 1 & 1 & 0 & 0 & 0 & 0 & 0\\ 
0 & 0 & 0 & 0 & 1 & 1 & 0 & 0 & 1 & 1 & 0 & 0 & 1 & 1 & 1 & 1\\ 
0 & 0 & 0 & 1 & 1 & 0 & 0 & 1 & 1 & 0 & 0 & 1 & 1 & 0 & 0 & 0
\end{pmatrix}.
\]
  Let us assume the first test result is $y_1 = 0$. We will then know
  the prior position of the target, once we receive the first time a
  test result $ y_i = 1, \ i\in\{2, 3, 4, 5, 6\}$. If $y_i = 0 $ in
  all tests, then $d_{6} \in \{ 1, 2, 3 \}$. Taking the expansion into
  the neighborhood at the right side into account, we have that
  $|\D_6| = |\{ 1, 2, 3, 4\}| = 4 .$ Otherwise if the first test
  result is $y_1 = 0$, then we know the prior position once we have
  the first time a test result $ y_i = 1, \ i\in \{ 2, 4, 6 \} $ or
  $ y_i = 0, \ i\in \{ 3, 5\}.$

\end{Example}

Now we will
show that the strategy is successful for the general case. 
\begin{Theorem}\label{nonadaptive}
The minimum $n$ such that there exists a successful strategy with $n$ tests is given by
\[ n(P_N,4)= \left\lceil\frac N2\right\rceil -2. \]
\end{Theorem}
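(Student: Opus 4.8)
The plan is to prove the two inequalities that pin down $n(P_N,4)$. The lower bound $n(P_N,4)\ge \lceil N/2\rceil-2$ is immediate: a non-adaptive strategy is a special case of an adaptive one (it is an adaptive strategy whose test sets happen not to depend on the observed bits), so a successful non-adaptive strategy is in particular a successful adaptive strategy. Hence the non-adaptive optimum cannot undercut the adaptive optimum $n_A(P_N,4)=\lceil N/2\rceil-2$ recorded in \eqref{ADK1}. All the work therefore lies in the matching upper bound, namely exhibiting a non-adaptive $(P_N,4)$-successful strategy that uses exactly $\lceil N/2\rceil-2$ tests, generalizing the matrix of the worked example.

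For the construction I would set $\T_1=\{\lceil N/2\rceil+1,\dots,N\}$ and, for $2\le i\le n$, take $\T_i$ to be the union of a \emph{left window} $L_i=\{\lceil N/2\rceil+2-i,\ \lceil N/2\rceil+3-i\}$ (the leading $11$-block, which shifts one vertex to the left per test) with the alternating $00/11$ blocks filling the right portion, exactly as in the displayed $N=16$ matrix. The design idea is that $\T_1$ splits $\D_0$ into a left and a right half, and the remaining $n-1$ rows run, simultaneously and without adaptivity, two copies of the adaptive shifting algorithm's sliding window of size $2$, one per half; the bit $y_1$ then tells us which of the two overlaid searches to read.

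I would then prove correctness by tracking $\D_i$ separately in the two branches $y_1=0$ and $y_1=1$. In the branch $y_1=0$ one shows by induction on $i$ that, as long as no detection has occurred, $\D_i=\{1,\dots,\lceil N/2\rceil+2-i\}$ is an interval pinned at the left boundary, and the single relevant outcome is the first $y_i=1$. In the branch $y_1=1$ one obtains the mirror interval anchored at $N$, now tracked by the alternating blocks, where detection reads as the first $y_i=1$ on even steps and the first $y_i=0$ on odd steps. The invariant that makes the whole overlay work is that at every step the blocks belonging to the \emph{inactive} half lie strictly beyond the current interval $\D_{i-1}$ (the $00$ gap next to $L_i$ guarantees this), so that $\T_i\cap\D_{i-1}$ equals exactly the one window relevant to the active half and the two searches never interfere. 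When the window meets the target, $\D_i=\Gamma_1(\{a,a+1\})$ has size $4$; and if the target is never caught it is cornered at $\{1,2,3\}$ (resp.\ $\{N-2,N-1,N\}$) on the last test, whence the single remaining $\Gamma_1$-expansion gives $|\D_n|=|\{1,2,3,4\}|=4$ (resp.\ $|\{N-3,\dots,N\}|=4$).

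The main obstacle I anticipate is establishing this non-interference invariant uniformly in $N$ and in the step index $i$, that is, verifying that $\D_{i-1}$ never overlaps the wrong-half blocks and that the parity alternation of detection outcomes on the right is internally consistent, while simultaneously handling the two parities of $N$ and the two boundary expansions at $1$ and $N$. Once this interval bookkeeping is organized as a clean induction on $i$, the accuracy bound $|\D_i|\le 4$ at detection and $|\D_n|\le 4$ in the never-detected case should follow mechanically from $|\Gamma_1(\{a,a+1\})|=4$ and the corner estimates.
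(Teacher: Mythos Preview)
Your proposal is correct and follows essentially the same approach as the paper. Both arguments derive the lower bound from the adaptive optimum~\eqref{ADK1}, construct the identical ``expanding accuracy'' test matrix (your $\T_1$, left windows $L_i$, and alternating $00/11$ blocks coincide with the paper's formulas (i)--(iii) for $a_{ij}$), and verify accuracy~$4$ by tracking $\D_i$ separately on the two branches of $y_1$; your explicit non-interference invariant is exactly the mechanism the paper describes more informally as ``delimiting the ends by a region of neighbor nodes alternating in test membership.''
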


\begin{proof}
We already 
know from equation~\eqref{ADK1} that for the adaptive case we
need no fewer than $\left\lceil\frac N2\right\rceil -2$ tests. Therefore, this also holds 
for the non-adaptive case.

We also do not need more than this amount of tests, since we can construct a successful strategy for each $N$.

This strategy bisects the number of nodes in the path into two, such that the graph is divided into two, almost of equal size in the number of nodes and sub paths. The nodes of one path belong to the test set, the others do not. This ensures that the test provides the knowledge, whether the target was on the first subpath or on the other one.

In order to avoid an expanding area of uncertainty at the connecting end of the two paths, which results from the possible movement of the target, we delimit the ends by a region of
neighbor nodes, which are alternating by their membership in the current test set.

By expanding the center region, we increase the accuracy of each test, since the number of nodes at the two paths is declining. Once the target gets from the end of one sub path into the region of alternating test membership, the prior position is determined and the target can only be located at the position of the bordering two nodes, expanded by the possible moves to each side. This results in an accuracy of s=4, which proves that the strategy is successful.

The strategy is formalized by the following.

  Let $n,N\in\NN$ be given. For $1 \leq i \leq  n$ and $1 \leq j \leq N$, we define
  the \textit{expanding accuracy strategy} $S(n,N)=A_{n,N}=\{a_{ij}\}$ by 

  \begin{enumerate}[label=(\roman*)]
    \item $a_{ij}=0$, for $1\leq j \leq \left\lceil\frac N2\right\rceil -i+1$
    \\
    \item $a_{ij}=
    \begin{cases}
        0 & \text{ if $i$ = even} \\
        1 & \text{ if $i$ = odd}
    \end{cases},$ for $\left\lceil\frac N2\right\rceil+i\leq j \leq N$.
  \item $a_{i,\left\lceil\frac N2\right\rceil +p-i+1} = g_p$, otherwise,\\

    where
    \begin{align*}
g_p = \begin{cases}
    1 & p \equiv 1\  (mod\ 4)\text{ or } p \equiv 2\ (mod\ 4)
    \\
    0 & \text{otherwise}
    \end{cases}
    \text{    , for $p\geq 1$. 
    }
\end{align*}
  \end{enumerate}
  

\end{proof}

\begin{Remark}
The result of Theorem 6 is surprising in the sense that it shows that our optimal non-adaptive strategy requires as many tests as the optimal adaptive strategy derived in \cite{ACDL13}. For practical applications, this means that you don't have to wait for the test result but can run the next test immediately. This can be a huge time saver. 
\end{Remark}

\begin{Proposition}
\label{Proposition3}
For $N> 6k$ there does not exist a $(P_N,s)$-successful non-adaptive strategy with $s<4k$.
\end{Proposition}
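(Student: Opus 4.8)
The plan is to prove the contrapositive by an adversary argument. Fixing $s=4k-1$, I would show that for every non-adaptive strategy $\T_1,\dots,\T_n$ there is a sequence of answers $y_1,\dots,y_n$ (equivalently, an admissible target walk) for which $|\D_i|\ge 4k>s$ for all $i$, so that $|\D_i|\le s$ never occurs and the strategy is not $(P_N,s)$-successful; ruling out $s=4k-1$ rules out every $s<4k$. As a preliminary reduction I would argue, exactly as in the cycle case and following \cite{ACDL18}, that it suffices to treat tests that are intervals of consecutive vertices: a non-consecutive test only enlarges the surviving sets, so a lower bound proved against consecutive tests is a lower bound against all tests. With this reduction $\T_{i+1}$ meets any interval in an interval, which keeps the geometry one-dimensional.

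The core invariant I would maintain is that $\D_i$ contains an interval $I_i=[l_i,r_i]$ of $4k$ consecutive vertices, and the inductive step rests on a single threshold. Writing $t=|\T_{i+1}\cap I_i|$ for the length of the tested part of $I_i$, there are two regimes. If $t\le 2k$, the adversary answers $0$: the gap of length $t$ that the test removes is bridged by $\Gamma_k$, since each surviving side expands by $k$ and $k+k\ge t$, so $\Gamma_k(I_i\setminus\T_{i+1})$ contains all of $[l_i,r_i]$, in fact an interval of length at least $6k-t\ge 4k$. If $t>2k$, the adversary answers $1$: then $\Gamma_k(\T_{i+1}\cap I_i)$ expands the tested interval by $k$ on each side and contains an interval of length $t+2k>4k$. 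On the doubly infinite path this already closes the induction, the interval drifting by at most $k$ per step while never losing length $4k$.

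The genuine difficulty, and where $N>6k$ enters, is the boundary. Near an end of $P_N$ the $\Gamma_k$-expansion is truncated and the clean computation degrades: for a tested interval abutting vertex $1$ (or $N$) with $2k<t<3k$, both answers can momentarily leave fewer than $4k$ vertices. The fix I would implement is to strengthen the invariant so that length is measured against available expansion room — requiring a quantity such as $|I_i|+\min(l_i-1,k)+\min(N-r_i,k)$ to stay at least $6k$, so that $I_i$ is allowed, indeed forced, to grow (up to about $5k$) whenever it touches a boundary, the extra length supplied by the half-open room on the far side. This is precisely the open/half-open dichotomy already exploited in \Cref{T5}. The role of $N>6k$ is that the total slack $N-4k$ strictly exceeds $2k$, so a length-$4k$ interval always has more than $k$ free vertices on at least one side and can never be squeezed against both ends simultaneously, which is exactly what forbids the fatal doubly-truncated configuration. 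This also explains the gap with the adaptive value: in \Cref{pcase} the searcher observes each answer and can chase the surviving interval toward an end, shrinking the guaranteed window to $3k+1$, whereas a non-adaptive searcher is blind to the interval's location and cannot force it against a boundary, which is the source of the extra $k$.

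I expect the boundary bookkeeping to be the main obstacle. One must verify that the strengthened invariant survives each test in every configuration — interior, one-sided, and the transitional steps where the interval is pushed from the interior toward an end and must trade length for room — and confirm that $N>6k$, rather than $N\ge 6k$, is exactly the threshold that makes this possible. The interior threshold argument of the second paragraph is routine; the delicate part is tracking how length is exchanged against boundary room so that $|\D_i|\ge 4k$ is never violated, from which $s^{*}(P_N)\ge 4k$ for non-adaptive search with $N>6k$ follows at once.
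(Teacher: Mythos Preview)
Your adversary is \emph{myopic}: at each round it chooses $y_{i+1}$ from $I_i$ and $\T_{i+1}$ alone, via the threshold $t\lessgtr 2k$, and your proposed ``strengthened invariant'' is again purely state-based. Any such round-by-round rule would work verbatim against an \emph{adaptive} searcher, and would therefore prove $s^*(P_N)\ge 4k$ adaptively --- contradicting Proposition~\ref{pcase}, which gives $s^*(P_N)=3k+1$ for $N\ge 4k+1$. Concretely, feed your adversary the shifting algorithm: after the first bisection every test has size $k+1\le 2k$, your rule answers $0$ each time, and $\D_i$ is driven toward the boundary one vertex per round until $|\D_i|=3k+1<4k$. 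Your final paragraph correctly identifies that the extra $k$ separating the non-adaptive bound from the adaptive one must come from the searcher's blindness to the interval's location, but nothing in your construction actually exploits that blindness; the adversary never consults a future test.

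The paper's argument is structurally different and uses non-adaptivity in an essential way. It does not track an interval through all rounds; instead it inspects only the \emph{last} test $\T_t$ on the $2k+1$ central vertices $\lceil N/2\rceil-k,\dots,\lceil N/2\rceil+k$. Either all of them have the same $\T_t$-membership (and keeping the target there already yields $|\D_t|\ge 4k+1$), or there is an adjacent pair $x^*,x^*{+}1$ with different $\T_t$-membership, whence one of $x^*,x^*{+}1$ agrees with $x^*{+}2k$ in $\T_t$. The target then sits at the fixed midpoint $x^*{+}k$ for rounds $1,\dots,t-1$, answering $y_i=f_{\T_i}(x^*{+}k)$, and at round $t$ is consistent with either of two positions roughly $2k$ apart; after the final $\Gamma_k$ this forces $|\D_t|\ge 4k$. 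The hypothesis $N>6k$ is used exactly to guarantee that $x^*{+}2k\le N$ and that the central block has $k$ vertices of expansion room on each side. Note in particular that no reduction to interval tests is made --- the paper's own optimal non-adaptive strategy uses non-interval rows --- and the key step (choosing $x^*$ from the \emph{final} test, then committing the target's entire trajectory to $x^*{+}k$) is precisely the look-ahead your proposal lacks.
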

\begin{proof}
Obviously, if the strategy contains only $1$ test, we can't get an answer with $s<4k$. Let our non-adaptive strategy consist of $t\geq 2$ tests. We consider values in $2k+1$ middle vertices (from $\left\lceil\frac N2\right\rceil-k$ to $\left\lceil\frac N2\right\rceil+k$) in test $t$. If all of them are the same, we can use following counter-strategy.




We restrict the target position to reside in all such cases in the test set, when the center $\left\lceil\frac{N}{2}\right\rceil$
node is within the test set.
Without that restriction, the target preserves a higher flexibility, which means that it can hide in larger elements of the bipartition defined by the test set.

We know for the first test in the induction begin, that we can not achieve a higher accuracy than $4k$.
We assume that we still did not achieve a higher accuracy in step $t-1$.

However, if the the test $\T_t$ excludes or includes $2k+1$ middle nodes from $\left\lceil\frac N2\right\rceil-k$ to $\left\lceil\frac N2\right\rceil+k$ ) as a whole, and the target resides in these nodes, then we have an accuracy of $s*=4k+1>4k$
after test $\T_t.$

If the nodes from $\left\lceil\frac N2\right\rceil-k$ to $\left\lceil\frac N2\right\rceil+k$ are only partially included in test $\T_{t}$, then there exists one pair of neighbor positions, $x^{*}$ and $x^{*}+1$, which have different values in the matrix.
Let us assume w.l.o.g., that $x^{*}\leq \left\lceil\frac N2\right\rceil.$
At position $x^{*}+2k$ at test round $t$, there is a position which is either within or outside of test $\T_t$. Now, if $x^{*}$ has the same test membership as $x^{*}+2k$,
we consider $x^{*}$ as a second possible target position, aside from $x^{*}+2k$. Otherwise, we consider $x^{*}+1$ as the second possible target position.

\begin{align*}
    y_i = 
    \begin{cases}
        f_{\T_i}(x^*+k) & \text{for } i=1,\dots,t-1  \\
        f_{\T_i}(x^*+2k) & \text{for } i=t .
    \end{cases}
\end{align*}
%

The target positions are possible because we restricted the target in the steps $t-1$ and kept the positions that were agreed upon the test membership with node $x^{*}+k$.
Both, the first and the second possible target positions are reachable from $x^{*}+k$. Taking the neighborhood expansion into account, we have a closed area of possible target positions
after test $t$, which extends from $x^{*}$, (or $x^{*}+1$), via $x^{*}+k$ to $x^{*}+2k$. This interval contains $4k + 1$ positions, if $x^{*}$ and $x^{*}+2k$ share the membership, otherwise $4k$ positions.

This completes the proof of Proposition~\ref{Proposition3}.\qed
\end{proof}
\begin{Theorem}\label{nami}
The minimum $s$, such that there exists a $(G,s)$-successful non-adaptive strategy is given by
\begin{align*}
    s^* = \begin{cases}
    N & \text{if } N\leq 2k
    \\
    \left\lceil\frac N2\right\rceil+k & \text{if } 2k < N \leq 6k
    \\
    4k & \text{if } N > 6k.
    \end{cases}
\end{align*}
\end{Theorem}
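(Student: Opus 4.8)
The plan is to prove matching upper and lower bounds in each of the three regimes, reusing whatever is already available. The key structural fact is that every non-adaptive strategy is in particular an adaptive one, so the non-adaptive $s^*$ is bounded below by the adaptive $s^*$ computed in the earlier path theorem (which gives $s^*=N$ for $N\le 2k+1$, $\lceil N/2\rceil+k$ for $2k+1<N<4k+1$, and $3k+1$ for $N\ge 4k+1$). Combined with Proposition~\ref{Proposition3}, this already supplies several of the bounds for free.

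For the upper bounds I would proceed as follows. When $N\le 2k$ the value $s=N$ is attained trivially. When $2k<N\le 6k$ a single balanced test $\T_1=\{1,\dots,\lceil N/2\rceil\}$ gives either $\D_1=\{1,\dots,\lceil N/2\rceil+k\}$ or $\D_1=\{\lceil N/2\rceil+1-k,\dots,N\}$, both of size at most $\lceil N/2\rceil+k$, so $s=\lceil N/2\rceil+k$ is attainable (here $\lceil N/2\rceil+k\le N$ since $N>2k$). When $N>6k$ I would generalize the expanding accuracy strategy of Theorem~\ref{nonadaptive} from $k=1$ to arbitrary $k$: keep the left region entirely outside the test, let the right region alternate its membership with the parity of the test index, and replace the period-$4$ central ruler $g_p$ by a period-$4k$ ruler, shifting the central window by $k$ rather than $1$ per test. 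One then checks, as in the $k=1$ case, that once the target crosses from a sub-path into the alternating central region its previous position is pinned down, so after $k$-neighborhood expansion at most $4k$ positions survive, giving $s=4k$.

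For the lower bounds, $N\le 2k$ gives $s^*\ge N$ from the adaptive theorem, and $N>6k$ gives $s^*\ge 4k$ directly from Proposition~\ref{Proposition3}. In the regime $2k<N\le 6k$ I must show that no number of non-adaptive tests beats $\lceil N/2\rceil+k$. For $2k<N\le 4k$ this is immediate, since the adaptive value already equals $\lceil N/2\rceil+k$ throughout that window. The remaining window $4k<N\le 6k$ is the heart of the matter and the main obstacle: here the adaptive bound is only $3k+1$, so any proof must genuinely exploit non-adaptivity. Indeed, for $N=12,k=2$ a second test can drive a single fixed candidate interval below the target value, so the argument cannot be purely one-step local.

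To handle $4k<N\le 6k$ I plan an adversary that, knowing the entire fixed test sequence $\T_1,\dots,\T_n$ in advance, maintains the invariant that $\D_i$ contains an interval of length at least $m^*:=\lceil N/2\rceil+k$. The driving arithmetic is $m^*\le 4k$, equivalently $\lfloor m^*/2\rfloor\le 2k$, which guarantees that the two-sided $k$-expansion of the larger half of a split interval has length at least $\lceil m^*/2\rceil+2k\ge m^*$; the difficulty is confined to splits whose surviving half abuts a path boundary, where the expansion is one-sided and this estimate fails. Since the total slack $N-m^*\le 2k$, there is always room to keep the candidate interval in the interior, and the adversary uses its foreknowledge of the tests to choose, at each balanced split, the answer that slides the interval away from the boundary. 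Formalizing this routing, in the spirit of the counting argument of Proposition~\ref{Proposition3} but maintaining a full interval rather than a single far-apart pair, is where the real work lies, and I expect it to be the main obstacle of the proof.
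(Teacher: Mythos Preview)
Your upper bounds and the lower bounds for $N\le 2k$ and $N>6k$ line up with the paper. The divergence is entirely in the lower bound for the middle range $2k<N\le 6k$, and in particular in the sub-window $4k<N\le 6k$ that you single out as the main obstacle.

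The paper does not maintain an interval invariant through the whole test sequence. It simply reuses the adversary of Proposition~\ref{Proposition3} verbatim for the entire middle range: inspect only the \emph{last} test $\T_n$ on the $2k{+}1$ central vertices, pick $x^*$ where $x^*$ and $x^*{+}1$ lie in different $\T_n$-classes, answer $y_i=f_{\T_i}(x^*{+}k)$ for $i<n$ and $y_n=f_{\T_n}(x^*{+}2k)$. This forces $\D_{n-1}\supseteq\{x^*,\dots,x^*{+}2k\}$ and then $\D_n\supseteq\Gamma_k(\{x^{*\prime},x^*{+}2k\})$, an interval of length $4k$ or $4k{+}1$ before clipping. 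For $N>6k$ it is unclipped; for $2k<N\le 6k$ the path boundary clips it, and the paper's point is that the clipped length is still at least $\lceil N/2\rceil+k$. No step-by-step routing is needed: the target sits at $x^*{+}k$ for all of tests $1,\dots,n-1$, so the intermediate $\D_i$ are irrelevant, and only the structure of $\T_n$ near the center matters. Your observation that ``a second test can drive a single fixed candidate interval below the target value'' is true but beside the point, because this adversary never tries to maintain an interval across tests.

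Your interval-maintenance programme is not wrong in spirit (your arithmetic $\lceil m^*/2\rceil+2k\ge m^*$ when $m^*\le 4k$ is correct, and the $k$-expansion of the larger half is indeed contiguous since the maximal gap is at most $\lfloor m^*/2\rfloor\le 2k$), but the boundary-routing you flag as the hard part is genuinely delicate, and the paper's last-test trick sidesteps it entirely. What the paper's approach buys is that non-adaptivity is exploited in one stroke: the adversary reads $\T_n$ first and then back-fills the earlier answers. What your approach would buy, if completed, is a more uniform inductive picture, but at the cost of the routing argument you leave open.
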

\begin{proof}
The first inequality is obvious.

The last one follows from Proposition~\ref{Proposition3}, and from the strategy similar to strategy 1,  where the $0\text{'s}$ and $1\text{'s}$ are substituted with $k$-successive $0\text{'s}$ and $1\text{'s}$, respectively.

The second inequality follows from a counter-strategy from the proof of Proposition~\ref{Proposition3}, but in the cases where $2k<N\leq 6k$, the object can not move to both sides for $k$. Thereby we get that  $s^*\geq \left\lceil\frac N2\right\rceil+k$. And we can get this result in the simple 1-step strategy. Observe that since $N\leq 6k$, $\left\lceil\frac N2\right\rceil+k\leq 4k$ for such $N$.
This completes the proof of Theorem~\ref{nami}.
\end{proof}

\begin{Remark}
The non-adaptive strategy for the case $N>6k$ above is optimal. Therefore, we find in this case a non-adaptive strategy for any arbitrary $k$ with the 
same number of tests as the adaptive strategy.
\end{Remark}

\section{A coding problem equivalent to the search problem}

We say that a coding problem is equivalent to a search problem if each code of the coding problem can be used simultaneously as a search strategy and vice versa. This equivalence has been known for a long time. Berlekamp, for example, used it in his PhD thesis \cite{B64} to develop error-correcting codes with feedback. In our case, too, there is an equivalent coding problem that we will briefly describe here. All the results from the previous sections also apply to this coding problem because of the equivalence.
We have a channel with a transmitter and a receiver over which we can transmit one bit per time unit without noise. Furthermore, a graph $G=(\N,\E)$ is given that corresponds to the graph from the search problem, where $\N$ denotes the positions and $\E$ denotes the possible paths. An object moves $k$ times per time unit exactly as in the previously described search problem. The starting position of the object is chosen arbitrarily. The aim of the sender now is to transmit where the object is located to the receiver with accuracy $s$ (as defined in the search problem). For this purpose, as mentioned above, he can transmit one bit per time unit without noise. For this purpose, the transmitter and receiver can use one of the search strategies developed in the previous sections. The transmitter sees the object and sends the first answer to the first test of the search strategy over the channel. Then the object moves and the transmitter sends the second answer to the second test of the search strategy. 
After $n$ tests at the latest, the receiver has received the position of the object with accuracy $s$. If the sender uses a non-adaptive search strategy, he can simply transmit all answers to all tests of the search strategy step by step in each time step. If he uses an adaptive search strategy, he must always choose the next test depending on the previous bits sent. We thus obtain an encoding strategy from our search strategy for the encoding problem described above. In the same way, each coding strategy can be used as a search strategy. It is often helpful to consider the problem as both an encoding problem and a search problem.

\section{Conclusions}

We have expanded the theory of combinatorial two-sided search. 

First, the case in which a searched object can move more steps between the tests is addressed. This is a claerly different from the original model, where the searched object is only permitted to move one step in between the tests. In our proposed model, the object can move up to $k$ steps between tests. This can be interpreted as the fact that the searched object is subject to a non-uniform speed spectrum and might move at different speeds.

Second, we considered the non-adaptive scenario. It has been shown that for the path graph there exists an optimal non-adaptive strategy with the same number of tests that the optimal strategy has for the same parameters.

The aim should be to analyze more complex graphs. There are examples in which an optimally non-adaptive strategy requires more tests than an optimally adaptive strategy does for more complicated graphs. In such cases, it would be interesting to consider strategies that are not completely adaptive, that is, to test in several stages.

In the course of analysis for more complicated graphs, we observe that if the model is restricted to the case where the searched object does not move after the last test, it can be advantageous in the sense of requiring significantly fewer case distinctions for the graph analysis. The graphs considered in this paper have no advantages for the analysis. 
We get the following result for the cycle graph:
For $s\geq 4k$ and $n\geq 0$, we have
\[
N^*_c(n,s)= (s-2k)2^n+2k,
\]
and the following result for the path graph:
For $n\geq 0$ and $s\geq 4k$, we have
\[ N^*_p(n,s)= (s-2k)2^{n}+k(2n+2). \]
\section*{Acknowledgement}
Christian Deppe was supported by the Bundesministerium f\"ur Bildung und Forschung (BMBF) through Grant 16KIS1005. Furthermore, he acknowledge the financial support by the Federal Ministry of Education and Research of Germany in the programme of “Souverän. Digital. Vernetzt.”. Joint project 6G-life, project identification number: 16KISK002. Alexey Lebedev was supported by RFBR and the National Science Foundation of Bulgaria (NSFB), project number 20-51-18002 and RFBR, project number 19-01-00364. Finally, we thank Olaf Gr\"oscho, Vladimir Lebedev and Mohammad J. Salariseddigh for their helpful and insightful comments.


\end{document}
